\theoremstyle{plain}
\newtheorem{Th}{Theorem}[section]
\newtheorem{Ex}[Th]{Example}
\newtheorem{Cor}[Th]{Corollary}
\newtheorem{Def}[Th]{Definition}
\newtheorem{Rem}[Th]{Remark}
\pgfplotsset{compat=1.10}
\newcommand{\Exp}{\mathds{E}}
\newcommand{\ii}{\mbox{i}}
\newcommand{\Prob}{\mathds{P}}
\newcommand{\vect}[1]{\vec{#1}}
\renewcommand{\vect}[1]{\boldsymbol{#1}}
\newcommand{\mat}[1]{\boldsymbol{#1}}
\title{Inhomogeneous phase--type distributions and heavy tails}
 \author[Albrecher and Bladt]{Hansj\"org Albrecher and Mogens Bladt}
 \address[Albrecher]{Universit\'e de Lausanne, Quartier UNIL-Chamberonne, B\^atiment Extranef,
1015 Lausanne}
\email{hansjoerg.albrecher@unil.ch}
 \address[Bladt]{Department of Mathematical Sciences, University of Copenhagen, Universitetsparken 5, DK-2100 Copenhagen \O, Denmark}
 \email{bladt@math.ku.dk}
\begin{document}\maketitle

\allowdisplaybreaks
\begin{abstract}
We extend the construction principle of phase-type (PH) distributions to allow for inhomogeneous transition rates and show that this naturally leads to direct probabilistic descriptions of certain transformations of PH distributions. In particular, the resulting matrix distributions enable to carry over fitting properties of PH distributions to distributions with heavy tails, providing a general modelling framework for heavy-tail phenomena. We also illustrate the versatility and parsimony of the proposed approach for the modelling of a real-world heavy-tailed fire insurance dataset. 
    	
\end{abstract}
\section{Introduction}
A phase--type distribution (PH) is the distribution of the time until absorption in a Markov jump process with finitely many states of which one is absorbing and the remaining being transient. PH distributions have a long history in Applied Probability (see \cite{bladt2017matrix} and references therein), especially in areas such as insurance risk and queueing theory, where under PH assumptions exact solutions and explicit formulae can often be derived even in rather complex models (see e.g.\ \cite{asm:03,AsAl10}). Though the class of phase--type distributions is dense in the class of distributions on the positive real line (in the sense of weak convergence), by construction PH tails are always light (exponential decay), and the latter property is a main concern regarding applications where heavy tails are present (such as large claims in insurance). 

In \cite{bladt-nielsen-samorodnitsky:2015} the class of PH distributions was extended to allow for infinite-dimensional matrices which led to a class of distributions with phase--type like properties and a genuinely heavy tail, and the estimation of such distributions was considered in 
\cite{BladtNandayapa2018}. 

In this paper we take another approach to define a tractable and dense class of heavy-tailed distributions. The idea is to transform PH distributions into heavy-tailed ones, but rather than transforming the PH random variable directly, we transform the time scales of each state of the underlying Markov process, leading to time--dependent jump intensities. This time-scaling allows to carry over some of the computational tools and advantages of the PH class outside of the latter and leads to a probabilistic description of classes of heavy-tailed distributions akin to the one for PH distributions. Compared to the approach in \cite{bladt-nielsen-samorodnitsky:2015}, the present approach has the advantage of preserving the finite dimensionality, and offers flexibility and transparency in the choice of tail type. A distinctive disadvantage is that renewal theory and, consequently, ladder height techniques break down due to the inhomogeneity of time. However, we show that for certain transformations calculations are still explicit, and the flexibility of PH distributions can in this way be more efficiently transferred to heavy-tailed distributions than with previous approaches. Moreover, the resulting new class of matrix distributions is again dense in the class of all distributions on the positive half-line, which paves the way for using it in model fitting. \\

From a modelling perspective, the approach proposed in this paper has some attractive features. Let us recall that a particular strength of the class of PH distributions is that it extends the favourable properties of an exponential random variable (probabilistic ones such as lack of memory as well as analytic ones such as the simplicity of calculations due to the elementary and paramount role of the exponential function in real and complex analysis) to cases where the exponential random variable itself is not a good model. It does so by concatenating these properties in a Markov jump process framework, leading to matrix expressions, but still maintaining those favourable properties. The denseness of the PH class then in principle allows to approximate any random variable and its role in the respective model with these techniques. On the other hand, Pareto random variables with their heavy tails can be motivated in various ways for particular modelling applications. One may for instance interpret them in terms of scaling phenomena in nature (like with considerations of Zipf's law). One can also simply view them as model candidates with power tails, which decay slower than exponential. Indeed, in various applications such a tail behaviour can be observed in data. Alternatively, one may also just see them as (possibly rescaled and shifted) exponentials of exponential random variables, which inherits some attractive mathematical properties, like a scaled version of lack of memory (and the latter is one reason for the particular attractiveness of Pareto random variables as modelling tools in certain applications, like reinsurance). At the same time, a pure Pareto random variable is often not a good fit to data (particularly when the entire range is considered). One way to deal with this in practice is to use spliced models (also sometimes called composite models), where one type of model is used for smaller values and a Pareto model is used for the tail (see e.g.\ \cite{pig,reynIme,scollnik2007}). 

A number of known distributions can be obtained as the distribution of a transformed exponential random variable. This is for example indeed the case for the Pareto, the Weibull and the generalized extreme value distribution (GEV). The obvious idea is to propose a matrix version of these distributions by transforming a phase--type distributed random variable with the same transformations. We show that the resulting distributions have similar forms and properties as their original counterparts, and that they may be seen as inhomogeneous phase--type distributions. This provides a unifying framework for seeing transformed phase--type distributed random variables as absorption times.  

Transformed phase--type random variables have e.g. been treated in \cite{ahn2012}, where the logarithmic transformation is used. This results in a Pareto type of tail behaviour. For other transformations we may obtain tail behaviours of other types like e.g. Weibull or GEV. The class of transformed distributions of a certain type is again dense in the class of distributions on the positive reals. In fact we will provide different sub--classes generated by mixtures of transformed Erlang distributions which are dense and can be written in an explicit and simple form.
In principle a classical phase--type distribution may approximate any heavy--tailed positive distribution arbitrarily well, but since they are light--tailed, the approximating distribution will always have a distinct tail which can become an important issue if applied to situations where the tail behaviour matters (like e.g. ruin probabilities). If one instead fits a transformed phase--type distribution to data with the "correct" tail, then not only will one be able to capture the proper tail behaviour but it will also lead to more parsimonious models (with fewer components) than fitting with the traditional PH class. On the other hand the transformed phase--type distributions may have computational advantages (and possibly methodological advantages in terms of interpretability) to other competitors like random variables with more general regularly varying tails.


From a statistical point of view one can apply standard techniques from phase--type fitting to the transformed random variables. This is for instance the case for the log--transformed phase--type distributions where one simply fits a phase--type distribution to the logarithm of the data points. In \cite{ahn2012}, a log--phase--type distribution was fitted to Danish fire insurance data. 
We provide an example using Dutch insurance data, where the main purpose is a comparison of our approach to a previously obtained model in \cite{abt}  which was based on splicing Erlang and Pareto distributions. Since the splicing components are in some sense implicit in the log--Phase--type setup, it is interesting to see whether we shall be able to retrieve parts of that model (and in a more automated way).

 While our estimation method is based on maximum likelihood which treats each data point with the same weight, it may be interesting in future research to extend the respective statistical techniques to incorporate extreme value analysis considerations for the fitting procedure of this type of models, with more weight being given to larger data points.

The paper is organized as follows. In Section \ref{sec2} we derive various theoretical properties of inhomogeneous PH distributions. On the basis of this representation, Section \ref{sec3} gives various properties for the class of matrix-Pareto distributions and provides some details for several special cases. Subsequently, Section \ref{sec:M-Gumbel} considers other transformations of PH distributions and studies some properties for the resulting distribution classes. In Section \ref{sec:Modelling} we then discuss in more detail the modelling dimension of matrix-Pareto distributions and illustrate its use for a set of Dutch fire insurance data that was recently studied by other statistical techniques in \cite{abt}. Finally, Section \ref{concl} concludes.

\section{Inhomogeneous Phase--type distributions}\label{sec2}
Consider a time--inhomogeneous Markov jump process (\cite{goodman-johansen1973}) $\{ X_t \}_{t\geq 0}$ on the finite state--space $E=\{1,2,...,p,p+1\}$, where states $1,2,...,p$ are transient and $p+1$ absorbing. Thus the intensity matrix of $\{ X_t \}_{t\geq 0}$
is of the form
\[  \mat{\Lambda}(t) = 
\begin{pmatrix}
\mat{T}(t) & \mat{t}(t) \\
\mat{0} & 0 
\end{pmatrix}  . 
  \]
Here, for any time $t\ge 0$, $\vect{t}(t)=-\mat{T}(t)\vect{e}$, where $\vect{e}$ denotes the column vector $\vect{e}=(1,1,...,1)^\prime$. Assume that $\Prob (X_0=p+1)=0$ and define 
 $\vect{\pi}=(\pi_1,...,\pi_p)$. 
 \begin{Def}
 Let 
 \[ \tau = \inf \{ t\geq 0 : X_t=p+1 \}  \]
 denote the time until absorption of $\{ X_t\}$. 
 The distribution of $\tau$ is then said to be an inhomogeneous phase--type distribution with representation $(\vect{\pi},\mat{T}(t))$ and we write 
 $\tau \sim \mbox{IPH}(\vect{\pi},\mat{T}(t))$. 
 \end{Def}

 The transition matrix $\mat{P}(s,t)=\{ p_{ij}(s,t) \}$, where
 \[  p_{ij}(s,t)=\Prob (X_t=j | X_s=i) , \]
 is related to the intensity matrix in terms of the product integral (see \cite{Gill-Johansen-1990,Slavik2007})
 \[  \mat{P}(s,t)= \prod_{s}^t (\mat{I}+\mat{\Lambda}(u)du) , \]
 which is defined by 
 \[ \prod_{s}^t (\mat{I}+\mat{\Lambda}(u)du) = \mat{I} + \sum _ { k = 1} ^ { \infty } \int _ { s } ^ { t } \int _ { s } ^ { u _ { k } } \cdots \int _ { s } ^ { u _ { 2} } \mat{\Lambda} \left( u _ { 1 } \right) \cdots  \mat{\Lambda} \left( u _ { k} \right) d u _ { 1} \cdots \text{d} u _ { k }  . \] 
 It is then straightforward to see that
 \begin{equation}
 \mat{P}(s,t) = 
\begin{pmatrix}
\displaystyle \prod_{s}^t (\mat{I}+\mat{T}(u)du) & \vect{e}-\displaystyle  \prod_{s}^t (\mat{I}+\mat{T}(u)du)\vect{e} \\
\vect{0} & 1
\end{pmatrix}  ,
  \label{eq:transition-for-PH}
 \end{equation}
 where $\vect{e}=(1,1,...,1)^{\prime}$.
\begin{Th}
Assume that $\tau \sim \mbox{IPH}(\vect{\pi},\mat{T}(t))$. Then the density $f$ and the distribution function $F$ of $\tau$ are given by 
\begin{eqnarray*}
f(x)&=&  \vect{\pi}\prod_{0}^x (\mat{I}+\mat{T}(u)du)\vect{t}(x) \\
F(x)&=& 1 - \vect{\pi}\prod_{0}^x (\mat{I}+\mat{T}(u)du)\vect{e} .
\end{eqnarray*}
\end{Th}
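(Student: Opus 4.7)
The plan is to derive the survival function $\bar F(x)=P(\tau>x)$ first and then obtain the density either by differentiation or by a direct infinitesimal argument; the density formula will then follow by using $\vect{t}(x)=-\mat{T}(x)\vect{e}$.

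For the survival function, observe that because state $p+1$ is absorbing, the event $\{\tau>x\}$ coincides with $\{X_x\in\{1,\dots,p\}\}$. Conditioning on the initial state and summing over the possible transient states at time $x$ gives
\begin{equation*}
\bar F(x)=\sum_{i=1}^p \pi_i\sum_{j=1}^p p_{ij}(0,x)=\vect{\pi}\,\mat{P}_{\mathrm{tr}}(0,x)\vect{e},
\end{equation*}
where $\mat{P}_{\mathrm{tr}}(0,x)$ is the upper-left $p\times p$ block of $\mat{P}(0,x)$. By the explicit form \eqref{eq:transition-for-PH}, this block is exactly the product integral $\prod_0^x(\mat{I}+\mat{T}(u)du)$, yielding $F(x)=1-\vect{\pi}\prod_0^x(\mat{I}+\mat{T}(u)du)\vect{e}$, which is the asserted distribution function.

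For the density, I would differentiate this expression. The defining series for the product integral
\begin{equation*}
\prod_0^x(\mat{I}+\mat{T}(u)du)=\mat{I}+\sum_{k\geq1}\int_0^x\!\!\int_0^{u_k}\!\!\cdots\!\int_0^{u_2}\mat{T}(u_1)\cdots\mat{T}(u_k)\,du_1\cdots du_k
\end{equation*}
is term-by-term differentiable in $x$, and the standard forward Kolmogorov identity for product integrals then gives
\begin{equation*}
\frac{d}{dx}\prod_0^x(\mat{I}+\mat{T}(u)du)=\prod_0^x(\mat{I}+\mat{T}(u)du)\,\mat{T}(x).
\end{equation*}
Applying this to $F(x)$ and using $\vect{t}(x)=-\mat{T}(x)\vect{e}$ produces
$f(x)=-\vect{\pi}\prod_0^x(\mat{I}+\mat{T}(u)du)\mat{T}(x)\vect{e}=\vect{\pi}\prod_0^x(\mat{I}+\mat{T}(u)du)\vect{t}(x)$, as claimed.

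The only non-routine step is justifying the differentiation of the product integral; once one has the forward equation $\partial_t\mat{P}(s,t)=\mat{P}(s,t)\mat{\Lambda}(t)$ for time-inhomogeneous Markov jump processes (which one can read off from the series representation given just before the theorem, or cite from \cite{Gill-Johansen-1990,Slavik2007}), everything else is bookkeeping. Alternatively, one could bypass differentiation entirely by an infinitesimal argument: condition on the state $j$ visited at time $x$ and multiply by the exit intensity $t_j(x)\,dx=-(\mat{T}(x)\vect{e})_j\,dx$, leading to the same formula and confirming the result probabilistically.
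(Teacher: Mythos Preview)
Your proof is correct and essentially matches the paper's argument: the survival function is derived identically from the block structure of $\mat{P}(0,x)$, and for the density the paper uses precisely the infinitesimal conditioning argument you describe at the end (condition on the transient state at time $x$ and multiply by the exit intensity $t_j(x)\,dx$), rather than differentiating via the forward equation. So your primary route for $f$ differs slightly in flavor---differentiation of the product integral instead of a direct probabilistic derivation---but both arrive at the same formula, and you already note the paper's approach as your alternative.
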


\begin{proof}
  Since $(\vect{\pi},0) \mat{P}(0,t)$ is the distribution of $X_t$, we get from  \eqref{eq:transition-for-PH} that
 \[ \Prob (X_t=j) = \vect{\pi}\prod_{0}^t (\mat{I}+\mat{T}(u)du)\vect{e}_j .\]
 The density $f$ for $\tau$ is now readily derived as
 \begin{eqnarray*}
 f(x)dx &=& \Prob (\tau\in (x,x+dx]) \\
 &=& \sum_{j=1}^p \vect{\pi}\prod_{0}^x (\mat{I}+\mat{T}(u)du)\vect{e}_j t_j(x)dx \\
 &=&  \vect{\pi}\prod_{0}^x (\mat{I}+\mat{T}(u)du)\vect{t}(x)dx ,
 \end{eqnarray*}
 so that
\begin{equation}
  f(x) = \vect{\pi}\prod_{0}^x (\mat{I}+\mat{T}(u)du)\vect{t}(x) . \label{eq:density}
\end{equation}
The distribution function $F$ for $\tau$ is obtained by 
 \begin{eqnarray*}
  1-F(x) &=& \Prob (\tau >x) \\
  &=&\Prob (X_x \in \{ 1,2,...,p\} ) \\
  &=&\sum_{j=1}^p \Prob (X_x=j) \\
  &=& \vect{\pi}\prod_{0}^x (\mat{I}+\mat{T}(u)du)\vect{e} .
\end{eqnarray*}
\end{proof}
By an entirely similar argument we obtain the following.
\begin{Th}\label{th:gen-vershoot}
 If $\tau \sim \mbox{IPH}(\vect{\pi},\mat{T}(t))$ then 
 \[ \Prob (\tau >s+t | \tau>s) = \frac{\vect{\pi} \prod_{0}^s(1+\mat{T}(u)du)}{\vect{\pi} \prod_{0}^s(1+\mat{T}(u)du)\vect{e}}\prod_s^t(1+\mat{T}(u)du)\vect{e}  \]
so that
\[  \tau-s | \{\tau>s \} \sim \mbox{IPH}\left( \vect{\alpha}, \mat{S}(\cdot)   \right) ,\]
where $\mat{S}(u)=\mat{T}(s+u)$ and
\[ \vect{\alpha}= \frac{\vect{\pi} \prod_{0}^s(1+\mat{T}(u)du)}{\vect{\pi} \prod_{0}^s(1+\mat{T}(u)du)\vect{e}}  .\]
\end{Th}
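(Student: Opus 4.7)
The plan is to reduce the conditional survival probability to a ratio of the IPH survival functions from the previous theorem, factor the numerator using a semigroup (Chapman--Kolmogorov) identity for product integrals, and then recognise the result as an IPH survival function with suitably re-initialised parameters.

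First, observe that $\{\tau>s+t\}\subseteq\{\tau>s\}$ for $t\geq 0$, so
\[
\Prob(\tau>s+t\mid\tau>s)=\frac{\Prob(\tau>s+t)}{\Prob(\tau>s)}
=\frac{\vect{\pi}\prod_0^{s+t}(\mat{I}+\mat{T}(u)du)\vect{e}}{\vect{\pi}\prod_0^{s}(\mat{I}+\mat{T}(u)du)\vect{e}},
\]
by the tail formula just proved. The key tool is the multiplicative (semigroup) property of the product integral,
\[
\prod_0^{s+t}(\mat{I}+\mat{T}(u)du)=\prod_0^{s}(\mat{I}+\mat{T}(u)du)\cdot\prod_s^{s+t}(\mat{I}+\mat{T}(u)du),
\]
which is the analytic counterpart of Chapman--Kolmogorov for the time-inhomogeneous chain and can be read off from \cite{Gill-Johansen-1990,Slavik2007}. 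Substituting this factorisation into the numerator and collecting the parts that survive independently on the left gives exactly the claimed expression for $\Prob(\tau>s+t\mid\tau>s)$.

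To identify the conditional law as $\mathrm{IPH}(\vect{\alpha},\mat{S}(\cdot))$, I would verify three things. (i) The vector $\vect{\alpha}=\vect{\pi}\prod_0^{s}(\mat{I}+\mat{T}(u)du)/\bigl(\vect{\pi}\prod_0^{s}(\mat{I}+\mat{T}(u)du)\vect{e}\bigr)$ is a sub-probability vector on $\{1,\dots,p\}$: entrywise non-negativity is inherited from the sub-stochasticity of $\prod_0^s(\mat{I}+\mat{T}(u)du)$, while $\vect{\alpha}\vect{e}=1$ by construction, so it is an honest probability vector (this is precisely the conditional distribution of $X_s$ given $\tau>s$, which makes the interpretation transparent). (ii) A change of variable $v=u-s$ in the product integral gives $\prod_s^{s+t}(\mat{I}+\mat{T}(u)du)=\prod_0^{t}(\mat{I}+\mat{T}(s+v)dv)=\prod_0^{t}(\mat{I}+\mat{S}(v)dv)$ with $\mat{S}(v)=\mat{T}(s+v)$. (iii) Combining (i) and (ii) in the survival formula yields
\[
\Prob(\tau>s+t\mid\tau>s)=\vect{\alpha}\prod_0^{t}(\mat{I}+\mat{S}(v)dv)\vect{e},
\]
which by the previous theorem is the survival function of $\mathrm{IPH}(\vect{\alpha},\mat{S}(\cdot))$.

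The main obstacle is purely the proper handling of product integrals: one must invoke the semigroup/Chapman--Kolmogorov property for inhomogeneous Markov jump processes and the change-of-variable rule, neither of which was proved from scratch in the paper. Once those two facts are taken as granted (with references), the argument is essentially algebraic and mirrors the standard Markov renewal/memoryless calculation used in the homogeneous PH case.
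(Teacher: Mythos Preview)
Your proposal is correct and follows essentially the same route as the paper, which in fact omits the details and merely states that the theorem follows ``by an entirely similar argument'' to the preceding one. Your write-up is more explicit than the paper's: you spell out the Chapman--Kolmogorov factorisation of the product integral and the change of variable $v=u-s$, and you also correctly observe that the product integral over $[s,s+t]$ (rather than $[s,t]$ as literally printed in the statement) is what is needed, which is indeed the intended reading.
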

Hence the overshoot over a certain level is again phase--type distributed.
Methods for numerical evaluation of transition probabilities given by the product integral is treated in e.g. \cite{HELTON1976410} or \cite{max-moller-1992}. We shall, however, concentrate on the cases where the matrices $\mat{T}(u)$ commute, which will simplify expressions and numerical methods considerably.
\begin{Cor}
If  $\tau \sim \mbox{IPH}(\vect{\pi},\mat{T}(t))$ and
$\mat{T}(t_1)$ and $\mat{T}(t_2)$ commute for all $t_1,t_2\geq 0$, then the density $f$ and the distribution function $F$ of $\tau$ are given by 
\begin{eqnarray*}
 f(x) &=& \vect{\pi}\exp \left( \int_0^x \mat{T}(u)du \right)\vect{t}(x) \\
 F(x) &=& 1 - \vect{\pi}\exp \left( \int_0^x \mat{T}(u)du \right)\vect{e} .
\end{eqnarray*}
\end{Cor}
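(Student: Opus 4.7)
The plan is to reduce the product integral $\prod_0^x(\mat{I}+\mat{T}(u)du)$ to the matrix exponential $\exp\left(\int_0^x \mat{T}(u)du\right)$ under the stated commutativity hypothesis. Once this identification is established, both formulas in the corollary follow immediately from the preceding theorem by direct substitution, since $f(x)=\vect{\pi}\prod_0^x(\mat{I}+\mat{T}(u)du)\vect{t}(x)$ and $1-F(x)=\vect{\pi}\prod_0^x(\mat{I}+\mat{T}(u)du)\vect{e}$.

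To establish this identification, I would start from the series definition of the product integral given in the excerpt:
\[
\prod_{0}^x (\mat{I}+\mat{T}(u)du) = \mat{I}+\sum_{k=1}^\infty \int_0^x \int_0^{u_k}\cdots \int_0^{u_2} \mat{T}(u_1)\cdots \mat{T}(u_k)\, du_1\cdots du_k.
\]
The $k$-th summand is an integral over the ordered simplex $\{0\le u_1\le u_2\le\cdots\le u_k\le x\}$. The key observation is that because $\mat{T}(u_i)$ and $\mat{T}(u_j)$ commute for all $i,j$, the integrand $\mat{T}(u_1)\cdots \mat{T}(u_k)$ is invariant under every permutation of $(u_1,\ldots,u_k)$.

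I would then symmetrize: summing over the $k!$ orderings of the variables, the $k!$ simplices tile the cube $[0,x]^k$ (up to a Lebesgue-null set), so
\[
\int_0^x\int_0^{u_k}\cdots\int_0^{u_2} \mat{T}(u_1)\cdots \mat{T}(u_k)\,du_1\cdots du_k = \frac{1}{k!}\int_0^x\!\cdots\!\int_0^x \mat{T}(u_1)\cdots \mat{T}(u_k)\,du_1\cdots du_k.
\]
The right-hand side factorizes, again by commutativity, as $\frac{1}{k!}\left(\int_0^x \mat{T}(u)du\right)^k$. Summing over $k$ yields the exponential series for $\exp\left(\int_0^x \mat{T}(u)du\right)$, finishing the identification.

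The only delicate point I anticipate is the justification of the symmetrization step: one must invoke commutativity not just for pairs $(\mat{T}(u_i),\mat{T}(u_j))$ but to conclude that the whole product $\mat{T}(u_1)\cdots \mat{T}(u_k)$ is symmetric, which follows by induction on $k$ using pairwise commutativity. Everything else is bookkeeping and the standard tiling of the cube by simplices, and convergence of the series can be justified by the usual bound $\|\mat{T}(u)\|\le M$ on the bounded interval $[0,x]$, which makes the series absolutely convergent in operator norm and permits the interchange of summation and integration.
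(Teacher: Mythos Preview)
Your proposal is correct and follows essentially the same route as the paper: both reduce the product integral to the matrix exponential by using commutativity to pass from the ordered simplex integral to $\tfrac{1}{k!}\left(\int_0^x \mat{T}(u)\,du\right)^k$ and then sum the exponential series. Your version is slightly more explicit about the symmetrization step and the convergence justification, but the argument is the same.
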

\begin{proof}
If $\mat{T}(t_1)$ and $\mat{T}(t_2)$ commute for all $t_1,t_2\geq 0$, then 
\begin{eqnarray*}
  \int _ { s } ^ { t } \int _ { s } ^ { u _ { k } } \cdots \int _ { s } ^ { u _ { 2} } \mat{T} \left( u _ { k } \right) \cdots  \mat{T} \left( u _ { 1} \right) d u _ { 1} \cdots d u _ { k }   &=& \frac { 1} { k ! } \int _ { s } ^ { t } \int _ { s } ^ { t } \cdots \int _ { s } ^ { t }  \mat{T} \left( u _ { k } \right) \cdots  \mat{T} \left( u _ { 1} \right) d u _ { 1} \cdots d u _ { k } \\
&=& \frac { 1} { k ! } \left( \int_s^t \mat{T}(u)du  \right)^k,
   \end{eqnarray*} 
so
\[ \prod_{s}^t (\mat{I}+\mat{T}(u)du) = \exp \left( \int_s^t \mat{T}(u)du \right)  \]
from which the result follows.
\end{proof}
An important special case where all matrices $\mat{T}(\cdot)$ commute, are the class of sub--intensity matrices on the form  $\mat{T}(t) = \lambda (t) \mat{T}$, which corresponds to the case where all transition rates of the Markov chain are scaled in the same way. More generally, block--diagonal matrices of the form
\[  \begin{pmatrix}
\mat{S}(u) & \mat{0} \\
\mat{0} & \mat{T}(u) 
\end{pmatrix} \]
commute whenever $\{ \mat{S}(u) \}$ commute and $\{ \mat{T}(u) \}$ commute. This includes the case where one of the two families of matrices are constant, e.g. $\mat{S}(u)=\mat{S}$ for all $u\geq 0$. In the latter case, the resulting  distribution will be a mixture of an inhomogeneous and a homogeneous phase--type distribution.
\begin{Def}
If $\mat{T}(t) = \lambda (t) \mat{T}$,
where $\lambda (t)$ is some known non--negative real function and  $\mat{T}$ is a sub--intensity matrix,
then we shall write
$\tau \sim \mbox{IPH}(\vect{\pi},\mat{T},\lambda )$ instead of 
$\mbox{IPH}(\vect{\pi},\mat{T}(t))$.
\end{Def}
\begin{Cor}\label{cor:PH-lambda-dens-and-dist}
If $\tau \sim \mbox{IPH}(\vect{\pi},\mat{T},\lambda )$, then the density $f$ and the distribution function $F$ of $\tau$ are given by 
\begin{eqnarray*}
 f(x) &=& \lambda (x) \vect{\pi}\exp \left( \int_0^x \lambda (t)dt\ \mat{T} \right)\vect{t}, \\
 F(x)&=&1- \vect{\pi}\exp \left( \int_0^x \lambda (t)dt\ \mat{T} \right)\vect{e}.
\end{eqnarray*}
\end{Cor}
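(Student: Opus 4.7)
The plan is to show this corollary follows directly from the preceding corollary (the commuting case) by verifying two things: first that the family $\{\mat{T}(t)\}_{t\ge 0}$ of the form $\mat{T}(t)=\lambda(t)\mat{T}$ indeed commutes in pairs, and second that the quantities $\int_0^x \mat{T}(u)\,du$ and $\vect{t}(x)$ reduce to the stated scalar multiples of $\mat{T}$ and $\vect{t}=-\mat{T}\vect{e}$.

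For the commutativity step I would simply compute $\mat{T}(t_1)\mat{T}(t_2)=\lambda(t_1)\lambda(t_2)\mat{T}^2=\lambda(t_2)\lambda(t_1)\mat{T}^2=\mat{T}(t_2)\mat{T}(t_1)$ for any $t_1,t_2\ge 0$, which is immediate since the $\lambda(t_i)$ are scalars. Hence the preceding corollary applies, yielding $f(x)=\vect{\pi}\exp\bigl(\int_0^x \mat{T}(u)\,du\bigr)\vect{t}(x)$ and $F(x)=1-\vect{\pi}\exp\bigl(\int_0^x \mat{T}(u)\,du\bigr)\vect{e}$.

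Next I would substitute. Since $\mat{T}$ does not depend on $u$, linearity of the integral gives $\int_0^x \mat{T}(u)\,du=\bigl(\int_0^x \lambda(u)\,du\bigr)\mat{T}$, which is exactly the matrix appearing inside the exponential in the claimed formulas. For the exit rate vector, $\vect{t}(x)=-\mat{T}(x)\vect{e}=-\lambda(x)\mat{T}\vect{e}=\lambda(x)\vect{t}$, where $\vect{t}=-\mat{T}\vect{e}$ is the (constant) exit vector associated with the homogeneous representation. Substituting these two identities into the density formula produces the factor $\lambda(x)$ out front and gives exactly $f(x)=\lambda(x)\vect{\pi}\exp\bigl(\int_0^x\lambda(t)\,dt\,\mat{T}\bigr)\vect{t}$; substitution into the survival formula gives the stated $F$.

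There is no real obstacle here: the result is a direct specialization of the previous corollary, and the only thing to check is that the scalar function $\lambda(t)$ can be pulled out of the matrix exponential's exponent and out of the exit rate vector, which is what the form $\mat{T}(t)=\lambda(t)\mat{T}$ is designed to allow. If anything requires care, it is just keeping the scalar $\int_0^x\lambda(t)\,dt$ and the matrix $\mat{T}$ in the correct positions within the exponential (they commute trivially, so no issue arises).
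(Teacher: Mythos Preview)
Your proposal is correct and follows essentially the same route as the paper: the paper's proof consists only of the one-line observation $\vect{t}(t)=-\mat{T}(t)\vect{e}=-\lambda(t)\mat{T}\vect{e}=\lambda(t)\vect{t}$, the commutativity having already been noted in the text preceding the definition. Your argument is simply a more explicit rendering of the same specialization of the previous corollary.
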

\begin{proof}
Follows immediately from
\[ \vect{t}(t) = -\mat{T}(t)\vect{e} = -\lambda (t) \mat{T}\vect{e} = \lambda (t) \vect{t}  . \]
\end{proof}

\begin{Rem}
The commutativity condition of the matrices $\mat{T}(t)$ may be slightly relaxed
by assuming only quasi--commutativity of the family $\{ \mat{T}(t)\}$,  which is
characterized by the property
\[  \frac{d}{dt} \left( \int_s^t \mat{T}(u)du \right)^k = \frac{1}{k} \left( \int_s^t \mat{T}(u)du \right)^{k-1} \mat{T}(t) , \] 
see \cite[Section 1.4]{breuer2012markov}. In this case the above expressions for the density and the distribution function remain valid. 
\end{Rem}

If $\tau \sim \mbox{IPH}(\vect{\pi},\mat{T},\lambda )$ and $\lambda (t) \equiv 1$, then $\tau$ has a conventional phase--type distribution (see \cite{bladt2017matrix}), in which case we write $\tau\sim \mbox{PH}(\vect{\pi},\mat{T})$. Now consider $X\sim \mbox{PH}(\vect{\pi},\mat{T})$, $g:\mathds{R}_+\rightarrow \mathds{R}_+$ an increasing and differentiable function and $Y:=g(X)$. 
Then
\begin{eqnarray*}
\bar{F}_Y(y)&=&1-F_Y(y) = \Prob (g(X)> y ) \\
&=&\Prob (X> g^{-1}(y))  \\
&=&\vect{\pi}e^{\mat{T}g^{-1}(y)}\vect{e} 
\end{eqnarray*}
and consequently, using that $\vect{t}=-\mat{T}\vect{e}$,
\begin{equation}
f_Y(y) = \frac{d}{dy}F_Y(y) = \vect{\pi}e^{\mat{T}g^{-1}(y)}\vect{t}  \frac{1}{g^\prime (g^{-1}(y))} .\label{eq:dens} \end{equation}
This suggests that a random variable
$\tau\sim \mbox{IPH}(\vect{\pi},\mat{T},\lambda )$ can be obtained as a transformation of a phase--type distributed random variable $X\sim \mbox{PH}(\vect{\pi},\mat{T})$ as follows.
\begin{Th}\label{th:link-lambda-and-g}
Let $\tau \sim \mbox{IPH}(\vect{\pi},\mat{T},\lambda )$ and let $g$ be defined in terms of its inverse function through
\[  g^{-1} (x) = \int_0^x \lambda (t)dt .  \]
Then 
\[  \tau \stackrel{d}{=} g(X) , \]
where $X\sim \mbox{PH}(\vect{\pi},\mat{T})$. 
\end{Th}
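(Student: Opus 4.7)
The plan is to prove equality in distribution by showing that the survival functions of $\tau$ and $g(X)$ coincide pointwise; both quantities have already essentially been computed in the preceding material, so the proof reduces to a direct comparison after substituting the definition of $g$.

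First I would invoke Corollary \ref{cor:PH-lambda-dens-and-dist} to write
\[
\bar F_\tau(y) \;=\; \vect{\pi}\exp\!\left( \int_0^y \lambda(t)\,dt \;\mat{T} \right)\vect{e}.
\]
Next, from the display preceding the theorem statement we already know that for $X \sim \mbox{PH}(\vect{\pi},\mat{T})$ and $g$ increasing,
\[
\bar F_{g(X)}(y) \;=\; \vect{\pi}\,e^{\mat{T}\,g^{-1}(y)}\vect{e}.
\]
Plugging in $g^{-1}(y) = \int_0^y \lambda(t)\,dt$ into the second expression and using that scalars commute with $\mat{T}$, we obtain exactly the first expression, yielding $\bar F_\tau \equiv \bar F_{g(X)}$ and hence $\tau \stackrel{d}{=} g(X)$.

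The only nontrivial point is to justify that $g$ is a bona fide function whose inverse appears in the PH calculation. Because $\lambda$ is non-negative, the map $x \mapsto \int_0^x \lambda(t)\,dt$ is non-decreasing; I would note that for the construction to yield a random variable $g(X)$ in the usual sense one should assume $\lambda$ is strictly positive (almost everywhere), in which case $g^{-1}$ is strictly increasing and continuous, and its inverse $g$ is well defined on the range $[0,\int_0^\infty \lambda(t)\,dt)$. Since $X$ takes values in $\mathds{R}_+$ with probability one, this range is exactly where $g$ needs to be defined; if $\int_0^\infty \lambda(t)\,dt < \infty$ then $g(X)$ is supported on a bounded interval and the identity still holds by the same computation.

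I do not anticipate a genuine obstacle: the result is essentially a change-of-variables restatement of the corollary, and the only thing that requires care is wording the regularity assumption on $\lambda$ so that $g$ is unambiguously defined. No integration by parts or product-integral manipulation is needed, because the commuting-matrices case has already been reduced to a matrix exponential with scalar argument $\int_0^x \lambda(t)\,dt$.
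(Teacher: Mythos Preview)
Your proposal is correct and follows essentially the same approach as the paper: compute the survival function of $g(X)$ via $\Prob(X>g^{-1}(y))=\vect{\pi}e^{\mat{T}g^{-1}(y)}\vect{e}$, substitute $g^{-1}(y)=\int_0^y\lambda(t)\,dt$, and recognize the result as the survival function of $\tau$ from Corollary~\ref{cor:PH-lambda-dens-and-dist}. Your discussion of the well-definedness of $g$ is slightly more detailed than the paper's, but the argument is the same.
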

\begin{proof}
First we notice that $g$ is well-defined since $\lambda (t)>0$ for all $t$ and correspondingly $x\rightarrow \int_0^x \lambda (t)dt$ is strictly increasing. Then consider 
$Y=g(X)$. We have that
\begin{eqnarray*}
 \Prob (Y>y)&=& \Prob \left( X> g^{-1}(y)  \right) \\
 &=& \Prob \left( X> \int_0^y \lambda (t)dt  \right) \\
 &=& \vect{\pi} \exp \left( \int_0^y \lambda (t)dt \mat{T} \right) \vect{e} .
 \end{eqnarray*} 
 Thus $Y\sim \tau$.
\end{proof}
If $h$ is an analytic function and  $\mat{A}$ is a matrix, we define
\[  h(\mat{A}) = \frac{1}{2\pi \ii} \oint_\gamma h(z)(z\mat{I}-\mat{A})^{-1}dz , \]
where $\gamma$ is a simple path enclosing the eigenvalues of $\mat{A}$. We refer to 
Section 3.4. of \cite{bladt2017matrix} for further details.
\begin{Th}
Let $\tau \sim \mbox{IPH}(\vect{\pi},\mat{T},\lambda )$, with $g$ defined in terms of its inverse function through 
\[  g^{-1} (x) = \int_0^x \lambda (t)dt .  \]
Assume that the Laplace transform for $g$, $L_g(s)$, exists for all $s>max_i \mbox{Re}(\lambda_i)$, where $\lambda_i$ denote the eigenvalues for $\mat{T}$. Then the mean of $\tau$ is given by 
 \begin{equation}
  \Exp (\tau ) = \vect{\pi} L_g(-\mat{T})\vect{t} . \label{eq:gen-mean-PH} 
  \end{equation}
More generally, if $g^\alpha$ exists in the same domain as above, where
 $\alpha>0$, then 
\begin{equation}
 \Exp (\tau^\alpha ) = \vect{\pi} L_{g^\alpha}(-\mat{T})\vect{t} ,  \label{eq:alpha-moment}
\end{equation}
where $L_{g^\alpha}$ denotes the Laplace transform of $g^\alpha$.
\end{Th}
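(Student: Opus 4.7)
The plan is to compute $\Exp(\tau^\alpha)$ directly using $\tau \stackrel{d}{=} g(X)$ from Theorem \ref{th:link-lambda-and-g} and then recognize the resulting matrix integral as $L_{g^\alpha}(-\mat{T})$. Since the formula \eqref{eq:gen-mean-PH} for the mean is just the case $\alpha=1$, it suffices to prove the more general moment formula \eqref{eq:alpha-moment}.

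First I would write
\[
\Exp(\tau^\alpha) = \Exp\bigl(g^\alpha(X)\bigr) = \int_0^\infty g^\alpha(x)\, \vect{\pi} e^{\mat{T}x}\vect{t}\,dx,
\]
using the standard PH density of $X\sim \mathrm{PH}(\vect{\pi},\mat{T})$. Pulling the deterministic row and column vectors outside the scalar integral (which is legitimate since they do not depend on $x$), this becomes
\[
\Exp(\tau^\alpha) = \vect{\pi}\left(\int_0^\infty g^\alpha(x)\, e^{\mat{T}x}\,dx\right)\vect{t}.
\]
The remaining task is to identify the matrix-valued integral as $L_{g^\alpha}(-\mat{T})$ in the sense of the functional-calculus definition given just before the statement.

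To do so, I would use the contour representation: for $z$ on a simple positively oriented path $\gamma$ enclosing the spectrum of $-\mat{T}$ and lying entirely in the half-plane $\{\mathrm{Re}(z) > \max_i \mathrm{Re}(\lambda_i)\}$ where $L_{g^\alpha}$ converges, one has
\[
L_{g^\alpha}(-\mat{T}) = \frac{1}{2\pi \ii}\oint_\gamma L_{g^\alpha}(z)\,(z\mat{I}+\mat{T})^{-1}dz.
\]
Substituting $L_{g^\alpha}(z) = \int_0^\infty g^\alpha(x) e^{-zx}dx$ and interchanging the two integrals (justified by the assumed existence of $L_{g^\alpha}$ in the relevant half-plane together with compactness of $\gamma$), this equals $\int_0^\infty g^\alpha(x)\Bigl(\tfrac{1}{2\pi\ii}\oint_\gamma e^{-zx}(z\mat{I}+\mat{T})^{-1}dz\Bigr)dx$. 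The inner contour integral is precisely $h(-\mat{T})$ for $h(z)=e^{-zx}$, which evaluates to $e^{\mat{T}x}$. This yields exactly the matrix integral obtained above, giving \eqref{eq:alpha-moment}, and specializing to $\alpha=1$ gives \eqref{eq:gen-mean-PH}.

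The main obstacle I expect is the interchange of the $x$-integration with the contour integral, which needs the spectral assumption on the eigenvalues of $\mat{T}$ relative to the abscissa of convergence of $L_{g^\alpha}$: this is exactly the hypothesis built into the statement, so one can invoke Fubini on $\gamma\times[0,\infty)$ once absolute integrability is verified from the uniform boundedness of $(z\mat{I}+\mat{T})^{-1}$ on $\gamma$ and the assumed convergence of $\int_0^\infty |g^\alpha(x)|e^{-\mathrm{Re}(z)x}dx$ for $z\in\gamma$. Alternatively, one could bypass the contour representation altogether by putting $\mat{T}$ into Jordan form and computing $\int_0^\infty g^\alpha(x) e^{\mat{T}x}dx$ blockwise, which again reduces to derivatives of $L_{g^\alpha}$ evaluated at the eigenvalues of $-\mat{T}$; both routes deliver the same identification.
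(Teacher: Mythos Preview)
Your proposal is correct and follows essentially the same approach as the paper: write $\Exp(\tau^\alpha)=\Exp\,g^\alpha(X)=\int_0^\infty g^\alpha(x)\,\vect{\pi}e^{\mat{T}x}\vect{t}\,dx$ via Theorem~\ref{th:link-lambda-and-g} and identify the result as $\vect{\pi}L_{g^\alpha}(-\mat{T})\vect{t}$. The paper's proof is terser---it simply invokes analyticity of the Laplace transform to justify the functional-calculus identification---whereas you spell out the contour-integral computation and the Fubini step explicitly, which is a welcome elaboration rather than a different method.
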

\begin{proof}
Laplace transforms are analytic functions in the domain where they exist, so
\[  \Exp (\tau)=\Exp g(X) = \int_0^\infty g(x)\vect{\pi}e^{\mat{T}x}\vect{t} dx = \vect{\pi}L_g (-\mat{T})\vect{t} . \]
Similarly, if $L_{g^\alpha}$ exists, then it is analytic and 
\[ \Exp (\tau^\alpha)=\Exp g^\alpha (X) = \vect{\pi}L_{g^\alpha} (-\mat{T})\vect{t} . \]
\end{proof}

We notice the following trivial but important result. 
\begin{Th}\label{triv}
Suppose that 
\[ \tau_i \sim g(X_i) \]
where $X_1,...,X_N$ are independent and $X_i\sim \mbox{PH}(\vect{\pi},\mat{T})$. Let $X$ be distributed as the mixture of $X_1,...,X_N$, with density 
\[ f_X(x)=\sum_{i=1}^N \alpha_i f_{X_i}(x), \]
where $\alpha_1+...+\alpha_N=1$. Then the mixture of $\tau_1,...,\tau_N$ with probabilities $\alpha_1,...,\alpha_N$ is distributed as $g(X)$.
\end{Th}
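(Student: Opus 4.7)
The plan is to verify the claim at the level of cumulative distribution functions, using only that $g$ is a monotone bijection (as imposed earlier in the excerpt, where $g\colon\mathds{R}_+\to\mathds{R}_+$ was taken to be increasing and differentiable, so $g^{-1}$ exists). No PH-specific machinery is needed.

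First, I would integrate the hypothesis $f_X = \sum_{i=1}^N \alpha_i f_{X_i}$ to get $F_X = \sum_{i=1}^N \alpha_i F_{X_i}$. Then, using monotonicity of $g$, I would compute, for each $y\ge 0$,
\[
F_{g(X)}(y) \;=\; \Prob(X\le g^{-1}(y)) \;=\; \sum_{i=1}^N \alpha_i F_{X_i}(g^{-1}(y)) \;=\; \sum_{i=1}^N \alpha_i \Prob(g(X_i)\le y) \;=\; \sum_{i=1}^N \alpha_i F_{\tau_i}(y),
\]
which is precisely the distribution function of the mixture of $\tau_1,\ldots,\tau_N$ with weights $\alpha_1,\ldots,\alpha_N$.

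An equivalent, perhaps cleaner route is to introduce an auxiliary categorical index $I$, independent of $(X_1,\ldots,X_N)$, with $\Prob(I=i)=\alpha_i$. Then the mixing hypothesis gives $X \stackrel{d}{=} X_I$, hence $g(X)\stackrel{d}{=} g(X_I)$, and conditioning on $\{I=i\}$ shows $g(X_I)\mid\{I=i\} \stackrel{d}{=} g(X_i) = \tau_i$, which yields the mixture representation directly. The two routes are essentially the same calculation.

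As the statement itself signals with ``trivial but important,'' there is no substantive obstacle: the content is the observation that the transformation $g$ commutes with mixing. I would expect the only thing to flag is ensuring $g$ is applied measurably (monotonicity plus continuity already give this) and noting that the representation is indifferent to whether the $X_i$ share a common PH representation or have different ones, since the argument only uses the identity $F_X=\sum_i \alpha_i F_{X_i}$.
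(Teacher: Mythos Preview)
Your proposal is correct; the paper itself does not supply a proof but merely labels the result ``trivial but important'' and states it without argument. Your CDF computation (or the equivalent auxiliary-index formulation) is exactly the kind of one-line verification the authors had in mind, so nothing more is needed.
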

\section{Matrix--Pareto distributions}\label{sec3}
We start by proving the following result.
\begin{Th}\label{th:F-and-f-of-log-PH}
Let  $Z=\exp (X) -1$ where $X\sim \mbox{PH}(\vect{\pi},\mat{T})$. Let $F_Z,\bar{F}_Z$ and $f_Z$ denote the distribution function, survival function and density of $Z$, respectively. Then
\begin{eqnarray*}
\bar{F}_Z(z)&=&1-F_Z(z)=\vect{\pi}(z+1)^{\mat{T}}\vect{e} \\
f_Z(z)&=&\vect{\pi}(z+1)^{\mat{T}-\mat{I}}\vect{t}
\end{eqnarray*}
If the real part of the eigenvalue  for $\mat{T}$ with maximum real part is smaller than $-\alpha$, then 
\[  \Exp ((Z+1)^\alpha) =  \vect{\pi}\left( -\alpha\mat{I}-\mat{T} \right)^{-1}\vect{t} <\infty. \]
The Laplace transform of $Z$ is given by
\[  L_Z(s) =  e^{s}\vect{\pi}s^{-\mat{T}}\Gamma (\mat{T},s)\vect{t} ,\ \ s>0, \]
where 
\[  \Gamma (\alpha,s) = \int_s^\infty t^{\alpha-1}e^{-t}dt \]
is the upper incomplete $\Gamma$--function. 
\end{Th}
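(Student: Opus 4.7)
The plan is to read off the first two formulas directly from the transformation framework just established. With $g(x)=e^x-1$ we have $g^{-1}(z)=\log(1+z)$ and $g'(x)=e^x$, so $Z\stackrel{d}{=}g(X)$ in the sense of Theorem \ref{th:link-lambda-and-g} (with $\lambda(t)=e^t$). The survival function formula $\bar F_Y(y)=\vect{\pi}e^{\mat{T}g^{-1}(y)}\vect{e}$ then becomes $\vect{\pi}e^{\mat{T}\log(1+z)}\vect{e}$, which I would rewrite as $\vect{\pi}(1+z)^{\mat{T}}\vect{e}$ using the functional calculus defined just above the statement (this is legitimate because $\mat{T}$ commutes with scalar multiples of itself). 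Substituting into \eqref{eq:dens} gives $f_Z(z)=\vect{\pi}(1+z)^{\mat{T}}\vect{t}/(1+z)$, and absorbing the scalar factor into the matrix power yields $\vect{\pi}(1+z)^{\mat{T}-\mat{I}}\vect{t}$.

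For the moment formula I would avoid integrating against $f_Z$ and instead use $(Z+1)^\alpha=e^{\alpha X}$ directly against the density of $X$:
\[
\Exp\bigl((Z+1)^\alpha\bigr)=\int_0^\infty e^{\alpha x}\vect{\pi}e^{\mat{T}x}\vect{t}\,dx=\vect{\pi}\int_0^\infty e^{(\alpha\mat{I}+\mat{T})x}dx\,\vect{t}.
\]
Under the stated eigenvalue hypothesis, every eigenvalue of $\alpha\mat{I}+\mat{T}$ has strictly negative real part, so the matrix integral converges to $-(\alpha\mat{I}+\mat{T})^{-1}$, giving the claimed expression (and in particular its finiteness).

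For the Laplace transform I would start from $L_Z(s)=e^s\Exp e^{-se^X}=e^s\int_0^\infty e^{-se^x}\vect{\pi}e^{\mat{T}x}\vect{t}\,dx$ and apply the substitution $u=se^x$, so that $dx=du/u$, the limits become $s$ to $\infty$, and $e^{\mat{T}x}=(u/s)^{\mat{T}}=s^{-\mat{T}}u^{\mat{T}}$. Pulling the constant factor $s^{-\mat{T}}$ out of the integral yields
\[
L_Z(s)=e^s\,\vect{\pi}\,s^{-\mat{T}}\int_s^\infty u^{\mat{T}-\mat{I}}e^{-u}\,du\,\vect{t}=e^s\vect{\pi}s^{-\mat{T}}\Gamma(\mat{T},s)\vect{t},
\]
as required.

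I expect the main technical point to be nothing more than justifying the two matrix identities $e^{\mat{T}\log(1+z)}=(1+z)^{\mat{T}}$ and $(u/s)^{\mat{T}}=s^{-\mat{T}}u^{\mat{T}}$; both follow from the holomorphic functional calculus recalled in the excerpt together with the fact that $\mat{T}$ commutes with any scalar multiple of itself, so the usual exponent rules for commuting arguments transfer. Once those identities are in hand, the rest of the proof is a direct application of Theorem \ref{th:link-lambda-and-g}, equation \eqref{eq:dens}, and a single change of variables for the Laplace transform.
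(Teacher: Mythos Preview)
Your argument is correct and follows essentially the same route as the paper: derive $\bar F_Z$ and $f_Z$ from the transformation formulas, obtain the $\alpha$-moment as the moment generating function of $X$ at $\alpha$, and compute the Laplace transform by a single change of variables (the paper integrates against $f_Z$ and substitutes $u=s(1+z)$, which is the same integral you reach via $u=se^x$). One slip to fix: the intensity in your parenthetical should be $\lambda(t)=1/(1+t)$, not $\lambda(t)=e^t$, since Theorem~\ref{th:link-lambda-and-g} requires $g^{-1}(x)=\int_0^x\lambda(t)\,dt=\log(1+x)$; your choice $\lambda(t)=e^t$ would give $g^{-1}(x)=e^x-1$, i.e.\ you have confused $g$ and $g^{-1}$. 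This does not affect the rest of your proof, as you never actually use $\lambda$ after that remark.
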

\begin{proof}
Assuming that $z>0$, 
 $x\rightarrow x\log(1+z)$ is analytic in the whole complex plane and so is then its exponential $x\rightarrow (1+z)^x$. Therefore $(1+z)^{\mat{T}}$
 can be written in terms of the Cauchy--integral formula
 \[  (1+z)^{\mat{T}}=\frac{1}{2\pi\ii}\oint_\gamma (1+z)^y (y\mat{I}-\mat{T})^{-1}dy  \]
 where $\gamma$ is a simple path in $\mathds{C}$ enclosing the eigenvalues for $\mat{T}$.
The results then follow from Theorem \ref{th:link-lambda-and-g} with $\lambda (y)={1}/(1+y)$ (which indeed results in $g(x)=e^x-1$) and Corollary \ref{cor:PH-lambda-dens-and-dist}. 

The mean is straight-forward, as the mean of $e^{X}$ is simply the moment generating function of $X$ evaluated at 1.
The $\alpha$--moment of $Y+1$ is obtained by noting that the Laplace transform of $e^{\alpha x}$ is $s\rightarrow 1/(s-\alpha )$ and using \eqref{eq:alpha-moment}.
Concerning the Laplace transform,
\begin{eqnarray*}
L_Z(s)&=&\int_0^{\infty}e^{-sz} f_Z(z)dz \\
&=&\vect{\pi}\int_0^{\infty}(1+z)^{\mat{T}-\mat{I}}e^{-sz}dz \ \vect{t} \\
&=&e^{s}\vect{\pi}s^{-\mat{T}}\Gamma (\mat{T},s)\vect{t} ,
\end{eqnarray*}
since  $\alpha\rightarrow \Gamma (\alpha ,s)$ is an entire function for $s\neq 0$ (see \cite{Olver:2010:NHM:1830479}, 8.2)
\end{proof}


\begin{Rem}\rm
In \cite{ahn2012} similar results are proved for the distribution of $\exp (X)$ (not subtracting 1) apart from the Laplace transform. Their results are equivalent, though the presentation differs in notation and our use of functional calculus in the proof. 
\end{Rem}

\begin{Rem}\rm
If the real part of the eigenvalue for $\mat{T}$ with maximum real part is smaller than $-1$,  then the mean of $Z=e^{X}-1$ exists and is given by 
\[  \Exp (Z)= \vect{\pi}\left( -\mat{I}-\mat{T} \right)^{-1}\vect{t}-1 .\]
\end{Rem}

\begin{Rem}\rm
The matrix $\Gamma (\mat{T},s)$ is given by
\[ \Gamma (\mat{T},s) = \frac{1}{2\pi\ii} \int_\gamma \Gamma (z,s)(z\mat{I}-\mat{T})^{-1}dz ,   \]
where $\gamma$ is a simple path enclosing the eigenvalues for $\mat{T}$. If all eigenvalues are simple (i.e. have multiplicity one), then $ \Gamma (\mat{T},s)$ can be readily calculated by the residual theorem, as the terms only involve $\Gamma (\lambda_i,s)$ divided by polynomials. On the other hand, if an eigenvalue $\lambda$ has multiplicity $m>1$, then also 
the $(m-1)$-order derivative of a polynomial times $\Gamma (z,s)$ w.r.t. $z$ will have to be evaluated at $\lambda$, which in turn involves derivatives of $z\rightarrow \Gamma (z,s)$ of orders $1,...,m-1$.
\end{Rem}
While the distribution of $\exp (X)$ can naturally be called a log--phase--type distribution, we shall refer to distributions of $\exp (X)-1$ as \textit{matrix--Pareto distributions}, since the distribution of $Z$ from Theorem \ref{th:F-and-f-of-log-PH} may be seen as a Pareto distribution with a matrix parameter. A previous example conforming with this idea relates to the class of matrix--exponential distributions (containing the phase--type distributions), which bears its name from the fact that its elements can be seen as exponential distributions with a matrix parameter. 

To gain generality, and in particular in order to draw on the analogy with the generalized Pareto distribution $G_{\xi,\beta}(x)=1-(1+\xi x/\beta)^{-1/\xi}$ for $\xi>0$ (cf.\ \cite{ekt}), we introduce the following additional scaling in the transformation (for reasons that become clear in Theorem \ref{th:main-Pareto}, it is useful to express the scaling in units of $\mu=\Exp (X)$):


\begin{Def}[Matrix-Pareto distribution]
Let $X\sim \mbox{PH}(\vect{\pi},\mat{T})$ and define
\[  Y = \frac{\beta (e^{X}-1)}{\mu } , \]
where $\beta\in\mathbb{R}$. If we parametrize with
\[  \mu = \Exp (X) = \vect{\pi}(-\mat{T})^{-1}\vect{e} ,\]
 we say that the distribution of $Y$ follows a matrix--Pareto distribution, and write
\[  Y \sim \mbox{M--Pareto}(\beta,\vect{\pi},\mat{T})  .\]
\end{Def}
If $ Y \sim \mbox{M--Pareto}(\beta,\vect{\pi},\mat{T})$, then 
\[  Y \sim \frac{\beta}{\mu} Z \]
where $Z=e^X-1$. Then from Theorem \ref{th:F-and-f-of-log-PH} we get that  
\[ \Prob (Y>y)
= \vect{\pi} \left( 1+ \mu \frac{y}{\beta} \right)^{\mat{T}}\vect{e}  \]
and 
\[  f_Y(y)=\frac{\mu}{\beta}\vect{\pi}(1+\mu \frac{y}{\beta})^{\mat{T}-\mat{I}}\vect{t} .\]
Moreover, by a simple transformation of the formula in Theorem \ref{th:F-and-f-of-log-PH}, we get that 
 $Y$ has Laplace transform 
\[  L_Y(s)= e^{\beta s/\mu}\vect{\pi}\left( \frac{\mu}{s\beta} \right)^{\mat{T}}\Gamma (\mat{T},s\beta/\mu)\vect{t} . \]
Next consider $\Prob (Y>x+y|Y>x)$. From Theorem \ref{th:gen-vershoot} we then get that

\begin{eqnarray*}
\Prob (Y>x+y|Y>x)
&=&\frac{\vect{\pi} \left(1 +\mu \frac{x}{\beta}  \right)^{\mat{T}} }{\vect{\pi} \left(1 +\mu \frac{x}{\beta}  \right)^{\mat{T}} \vect{e}}\cdot \left[ 
\frac{ 1+\mu \frac{x}{\beta}+ \mu \frac{y}{\beta}}{1+\mu \frac{x}{\beta}} \right]^{\mat{T}}\vect{e} \\
&=& \frac{\vect{\pi} \left(1 +\mu \frac{x}{\beta}  \right)^{\mat{T}} }{\vect{\pi} \left(1 +\mu \frac{x}{\beta}  \right)^{\mat{T}} \vect{e}}\cdot \left[ 
1+\frac{\mu }{\beta+\mu x} y \right]^{\mat{T}}\vect{e},
\end{eqnarray*}
from which we see that conditionally on $Y>x$, $Y$ has a matrix--Pareto distribution given by
\[  \mbox{M--Pareto}\left( \beta +\mu x,\frac{\vect{\pi} \left(1 +\mu \frac{x}{\beta}  \right)^{\mat{T}} }{\vect{\pi} \left(1 +\mu \frac{x}{\beta}  \right)^{\mat{T}} \vect{e}},\mat{T}   \right) . \] 
We collect the previous results in the following theorem. 

\begin{Th}\label{th:main-Pareto}
Let $Y\sim \mbox{M--Pareto}(\beta,\vect{\pi},\mat{T})$. Denote by $
\bar{F}(y)=\bar{F}_{\beta,\vect{\pi},\mat{T}}(y)=\Prob (Y>y)$ the survival function of $Y$.
Then 

\medskip

\noindent (a) $\bar{F}_{\beta,\vect{\pi},\mat{T}}(y)= \vect{\pi} \left( 1+ \mu \frac{y}{\beta} \right)^{\mat{T}}\vect{e} $ .

\medskip

\noindent (b) $ f_Y(y)=\frac{\mu}{\beta}\vect{\pi}(1+\mu \frac{y}{\beta})^{\mat{T}-\mat{I}}\vect{t} $. 

\medskip

\noindent (c) $\Prob (Y>x+y|Y>x) = \bar{F}_{\beta +\mu x, \tilde{\pi},\mat{T}}(y)$,
where
\[ \tilde{\vect{\pi}}= \frac{\vect{\pi} \left(1 +\mu \frac{x}{\beta}  \right)^{\mat{T}} }{\vect{\pi} \left(1 +\mu \frac{x}{\beta}  \right)^{\mat{T}} \vect{e}} .\]

\medskip

\noindent (d) $L_Y(s)= e^{\beta s/\mu}\vect{\pi}\left( \frac{\mu}{s\beta} \right)^{\mat{T}}\Gamma (\mat{T},s\beta/\mu)\vect{t} $. 

\medskip 

\noindent (e) If the real part of the eigenvalue for $\mat{T}$ with maximum real part is less than $-\alpha$, then $$\Exp ((\frac{\mu}{\beta}Y+1)^\alpha) =  \vect{\pi}\left( -\alpha\mat{I}-\mat{T} \right)^{-1}\vect{t} <\infty.$$
\end{Th}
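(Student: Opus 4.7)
The plan is to observe that all five statements are in essence scaling translations of results already established for $Z = e^X - 1$ in Theorem \ref{th:F-and-f-of-log-PH}, combined with the memoryless-type property from Theorem \ref{th:gen-vershoot}. Since $Y = (\beta/\mu) Z$, every quantity involving $Y$ reduces to a quantity involving $Z$ via the substitution $z = \mu y / \beta$, and the proof amounts to doing these substitutions cleanly and collecting the already-derived expressions stated in the paragraph preceding the theorem.

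Concretely, for parts (a) and (b) I would write $\bar{F}_Y(y) = \Prob(Z > \mu y/\beta)$ and apply the survival and density formulas from Theorem \ref{th:F-and-f-of-log-PH} directly, picking up the Jacobian $\mu/\beta$ in the density. For part (d), I would use the same substitution in the Laplace transform:
\[
L_Y(s) = \Exp[e^{-sY}] = \Exp[e^{-(s\beta/\mu) Z}] = L_Z(s\beta/\mu),
\]
and plug in the closed form for $L_Z$ from Theorem \ref{th:F-and-f-of-log-PH}. For part (e), I would note that $(\mu Y/\beta + 1)^\alpha = (Z+1)^\alpha = e^{\alpha X}$, so the moment in question is just the moment generating function of $X$ evaluated at $\alpha$, and this is $\vect{\pi}(-\alpha\mat{I}-\mat{T})^{-1}\vect{t}$ under the stated spectral condition, exactly as in Theorem \ref{th:F-and-f-of-log-PH}.

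Part (c) is the only step with any real content beyond substitution. Here I would invoke Theorem \ref{th:gen-vershoot} for the inhomogeneous PH representation of $Y$ (with $\lambda(t) = 1/(1+t)$ and appropriate scaling by $\mu/\beta$), and compute the resulting initial distribution and scale parameter. The calculation in the paragraph before the theorem already exhibits the identity
\[
\frac{1 + \mu(x+y)/\beta}{1 + \mu x/\beta} = 1 + \frac{\mu}{\beta + \mu x}\, y,
\]
which is precisely what is needed to recognize the conditional survival function as that of a matrix-Pareto law with scale $\beta + \mu x$ and tilted initial vector $\tilde{\vect{\pi}}$; all that remains is to verify that $\tilde{\vect{\pi}}\vect{e} = 1$, which is immediate from its definition.

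The only subtlety, and the one place a sign or factor error is easy to make, is keeping consistent track of the scale $\beta/\mu$ through the change of variables in every part; once that bookkeeping is fixed, nothing is left but to assemble the displayed formulas. There is no serious obstacle, since the heavy lifting (functional calculus for $(1+z)^{\mat{T}}$, convergence of the $\Gamma$-integral, analyticity needed to invoke the Cauchy-integral definition of $h(\mat{T})$) has already been carried out in Theorem \ref{th:F-and-f-of-log-PH} and Theorem \ref{th:gen-vershoot}.
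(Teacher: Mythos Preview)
Your proposal is correct and follows essentially the same approach as the paper. Indeed, the paper presents Theorem \ref{th:main-Pareto} with the preface ``We collect the previous results in the following theorem,'' and all five parts are derived in the preceding paragraphs exactly as you outline: (a), (b), (d), (e) by the scaling $Y=(\beta/\mu)Z$ combined with Theorem \ref{th:F-and-f-of-log-PH}, and (c) via Theorem \ref{th:gen-vershoot} together with the algebraic identity $\bigl(1+\mu(x+y)/\beta\bigr)\big/\bigl(1+\mu x/\beta\bigr)=1+\mu y/(\beta+\mu x)$.
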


\begin{Rem}\rm
Note that these results can indeed be seen as generalizations of results from Theorem 3.4.13 of \cite{ekt}, which stated such properties for the generalized Pareto distribution (which for $\xi>0$ refers to the scalar case $p=1$ in our setting). In particular, property (c) above (cf.\ also \cite[Eqn.(7)]{ahn2012}) is a considerable extension of \cite[Equ.3.53]{ekt}, which can be quite useful when using models based on matrix-Pareto distributions. For instance, one reason for the popularity of Pareto distributions among heavy-tailed distributions in reinsurance modelling are the simple pricing formulas for XL reinsurance contracts based on \cite[Equ.3.53]{ekt}, where $x$ will typically refer to the retention level. The present extension shows that these properties can be carried over to the much more general class of matrix-Pareto distributions.  
\qed
\end{Rem}


%

\begin{Ex}\label{ex:erlang}\rm
Consider the distribution of
\[  Y=e^X-1, \]
where $X$ has an Erlang distribution $\mbox{Er}_n(\lambda)$, i.e. $X\sim X_1+...+X_n$ for i.i.d. $X_i\sim$ Exp$(\lambda)$. Then $Y\sim \mbox{M-Pareto}(\beta,\vect{\pi},\mat{T})$ with $\beta=\mu=\Exp (Y)$ (which is a particular type of a log-Gamma distribution). A phase--type representation of the $\mbox{Er}_n(\lambda)$ distribution is $\vect{\pi}=(1,0,...,0)$ and
\[ \mat{T} = \begin{pmatrix}
-\lambda & \lambda & 0 & ... & 0 & 0 \\
0 & -\lambda & \lambda & ... & 0 & 0 \\
0 & 0 & -\lambda & ... & 0 & 0 \\
\vdots & \vdots &\vdots  & \vdots \vdots \vdots  &\vdots &\vdots  \\
0 & 0 & 0 & ... & -\lambda & \lambda \\
0 & 0 & 0 & ... & 0 & -\lambda
\end{pmatrix} ,\]
where the vector is $n$--dimensional and $\mat{T}$ is an $n\times n$ matrix.
Now 
\[ (s\mat{I}-\mat{T})^{-1} = 
\begin{pmatrix}
\frac{1}{s+\lambda} & \frac{\lambda}{(s+\lambda)^2} &  \frac{\lambda^2}{(s+\lambda)^3} & ... &  \frac{\lambda^{n-1}}{(s+\lambda)^n} \\
0 & \frac{1}{s+\lambda} & \frac{\lambda}{(s+\lambda)^2} &  ... & \frac{\lambda^{n-2}}{(s+\lambda)^{n-1}} \\
0 & 0 &  \frac{1}{s+\lambda} & ... & \frac{\lambda^{n-3}}{(s+\lambda)^{n-2}} \\
\vdots & \vdots & \vdots & \vdots\vdots\vdots & \vdots \\
0 & 0 & 0 & ... & \frac{1}{s+\lambda}
\end{pmatrix},
 \]
 so
\[ (1+x)^{\mat{T}} = \frac{1}{2\pi \ii} \oint_\gamma (1+x)^s (s\mat{I}-\mat{T})^{-1}ds \]
is a matrix with $(i,j)$th element, $j\geq i$, equal to
\[  \frac{1}{2\pi\ii}\oint_\gamma  \frac{\lambda^{j-i}(1+x)^s}{(s+\lambda)^{j-i+1}}ds . \]
Here $\gamma$ is a simple path about the multiple eigenvalue $-\lambda$. For $j<i$ the elements are clearly zero. Defining 
\[  \phi (s) = (s+\lambda)^{j-i+1}\frac{\lambda^{j-i}(1+x)^s}{(s+\lambda)^{j-i+1}} = \lambda^{j-i}(1+x)^{s}  , \]
we get by the residue theorem
\[  \frac{1}{2\pi\ii}\oint_\gamma \frac{\lambda^{j-i}(1+x)^s}{(s+\lambda)^{j-i+1}}ds = \frac{\phi^{(j-i)}(-\lambda)}{(j-i)!} ,  \]
where $\phi^{(k)}(s)$ denotes the $k$th order derivative of $\phi$. But
\[ \frac{d^k}{ds^k}(1+x)^s=(1+x)^s (\log(1+x))^k  \]
so we conclude that
\[   \frac{1}{2\pi\ii}\oint_\gamma  \frac{\lambda^{j-i}(1+x)^s}{(s+\lambda)^{j-i+1}}ds = (1+x)^{-\lambda} \frac{\lambda^{j-i}}{(j-i)!}(\log(1+x))^{j-i}  . \]
Then the tail of $Y$ is seen to be the sum of the terms in the first row of $(1+y)^{\mat{T}}$,
\begin{eqnarray*}
\Prob (Y>y)&=&\vect{\pi}(1+y)^{\mat{T}}\vect{e} \\
&=& (1+y)^{-\lambda}\sum_{j=0}^{n-1}\frac{\lambda^j}{j!}(\log(1+y))^{j} .
\end{eqnarray*}
The density is given by
\begin{eqnarray}
f_Y(y)&=&\vect{\pi}(1+y)^{\mat{T}-\mat{I}}\vect{t} \nonumber \\
&=&(1+y)^{-1}\vect{\pi}(1+y)^{\mat{T}}\vect{t} \nonumber\\
&=& \frac{\lambda^n}{(n-1)!}(1+y)^{-\lambda-1}\left[\log(1+y)\right]^{n-1},  \label{dens-log-erlang}
\end{eqnarray}
since $\vect{t}=(0,0,...,0,\lambda)$, so the density is essentially given by $\lambda$ times the $(1,n)$th element of $(1+x)^{\mat{T}}$. \qed
\end{Ex}
An immediate consequence of Example \ref{ex:erlang} is that if $X$ is a mixture of Erlang distributions $\mbox{Er}_{n_i}(\lambda_i)$, $i=1,...,N$, i.e.
\[  f_X(x)=\sum_{i=1}^N \alpha_i f_{X_i}(x) = 
\sum_{i=1}^N \alpha_i \frac{(x-1)^{n_i-1}}{(n_i-1)!}\lambda_i^{n_i} e^{-\lambda_i x} \]
where $\alpha_1+...+\alpha_N=1$, then $Y$ is a mixture of distributions on the form  \eqref{dens-log-erlang}, i.e.
\[ f_Y(y)=\sum_{i=1}^N \alpha_i\frac{\lambda^{n_i}}{(n_i-1)!}(1+y)^{-\lambda_i-1}\left[\log(1+y)\right]^{n_i-1}, \]
cf. also Theorem \ref{triv}. Mixtures of Erlang distributions constitute the smallest and simplest sub--class of phase--type distributions which is dense in the class of distributions on the positive real line, and has often been employed in applications (see e.g.\ \cite{willmot2007class}). Since $f: x\rightarrow \exp (x)-1$ and $f^{-1}: x\rightarrow \log (x+1) $  are continuous functions, by the continuous mapping theorem it is clear that also the class of mixtures of log--Erlang distributions is dense in the class of distributions on the positive half-line (this denseness was also already noted in \cite[Th.1]{ahn2012} for general log-PH distributions). Due to its potential for modelling purposes, we formulate the above result as a theorem.
\begin{Th}\label{th:denseness-M-Pareto}
The class of matrix--Pareto distributions generated by mixtures of Erlang distributions
\[  f_X(x)= 
\sum_{i=1}^N \alpha_i\frac{(x-1)^{n_i-1}}{(n_i-1)!}\lambda_i^{n_i} e^{-\lambda_i x} \]
contains densities of the form
\[ f_Y(y)=\sum_{i=1}^N \alpha_i\frac{\lambda^{n_i}}{(n_i-1)!}(1+y)^{-\lambda_i-1}\left[\log(1+y)\right]^{n_i-1}. \]
The class is dense (in the sense of weak convergence) in the class of distributions on the positive real line.
\end{Th}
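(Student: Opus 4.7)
The proof splits cleanly into two independent stages, and the entire machinery is already available in the paper.

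For the first claim (that mixtures of Erlang densities in $X$ yield the stated mixture densities in $Y$), no new computation is needed. Example \ref{ex:erlang} computes exactly the density of $Y=e^X-1$ when $X\sim \mbox{Er}_n(\lambda)$, producing the summand $\frac{\lambda^n}{(n-1)!}(1+y)^{-\lambda-1}[\log(1+y)]^{n-1}$. Theorem \ref{triv} then says that the map $g(x)=e^{x}-1$ commutes with finite mixtures: if $X$ has density $\sum_{i}\alpha_i f_{X_i}$ with $X_i\sim\mbox{Er}_{n_i}(\lambda_i)$, then $Y=g(X)$ has the corresponding mixture density. So the first part reduces to assembling these two results.

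For the denseness claim, my plan is to transport weak denseness through the bijection $g$. Take any distribution $G$ on $[0,\infty)$ and let $Y\sim G$. Set $X:=\log(1+Y)=g^{-1}(Y)$, which has some distribution $H$ on $[0,\infty)$. Now invoke the classical fact that finite mixtures of Erlang distributions are weakly dense in the set of distributions on $[0,\infty)$ (see e.g.\ \cite{willmot2007class}; this is also a consequence of the denseness of the PH class combined with the fact that every PH distribution is a limit of Erlang mixtures). This produces a sequence of Erlang mixtures $H_n$ with $H_n\Rightarrow H$. Let $X_n\sim H_n$ and define $Y_n:=g(X_n)=e^{X_n}-1$; by the first part of the theorem, the distribution of $Y_n$ is a matrix-Pareto of the required form.

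It remains to show $Y_n\Rightarrow Y$. Since $g(x)=e^{x}-1$ is continuous on $[0,\infty)$, the continuous mapping theorem applied to $X_n\Rightarrow X$ gives $g(X_n)\Rightarrow g(X)$, i.e.\ $Y_n\Rightarrow Y$. Hence the class of matrix-Pareto distributions generated by Erlang mixtures approximates $G$ arbitrarily well in weak convergence, which is the denseness claim.

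The only conceptual ingredient beyond the results already proved in the paper is the weak denseness of Erlang mixtures themselves, and this is a standard fact that just needs to be cited; the continuity of $g$ and $g^{-1}$ between $[0,\infty)$ and itself makes the continuous mapping step automatic, so no genuine obstacle arises.
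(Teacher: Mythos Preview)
Your proposal is correct and follows essentially the same route as the paper: the paper derives the mixture density directly from Example \ref{ex:erlang} together with Theorem \ref{triv}, and obtains denseness by citing the weak denseness of Erlang mixtures and applying the continuous mapping theorem via the continuous bijection $g(x)=e^{x}-1$ (and its continuous inverse $g^{-1}(y)=\log(1+y)$). Your write-up simply makes the continuous-mapping step more explicit by starting from a target law $G$, pulling back through $g^{-1}$, approximating, and pushing forward again.
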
 
This means that any distribution may be approximated arbitrarily closely by either a mixture of Erlang distributions or a mixture of matrix--Pareto distributions generated by Erlangs. 
Whether one chooses one over the other class when approximating a distribution in practice should depend on the tail of the distribution to be approximated. If the tail is of Pareto or log--Gamma type, then one will obtain a much better approximation using the matrix--Pareto,  as for the mixtures of Erlangs the number of phases needed would be large and still the resulting approximation will not capture the tail behaviour well. We illustrate this important point in Section \ref{sec:Modelling} by fitting a general matrix--Pareto distribution to a heavy-tailed dataset, and it will turn out that an excellent fit is in fact obtained by a matrix--Pareto which is close to being generated by a mixture based on Erlangs.

\begin{Ex}\rm
Consider the distribution of
\[  Y=e^X-1, \]
where $X$ has a generalized Erlang distribution $\mbox{Er}_n(\lambda_1,...,\lambda_n)$, i.e. $X\sim X_1+...+X_n$ for i.i.d. $X_i\sim \mbox{Exp} (\lambda_i)$, $i=1,...,n$. Assume for simplicity that all $\lambda_i$ are different. Then a similar calculation as above reveals that the density for $Y$ is given by
\[ f_Y(y)=\left(\prod_{j=1}^{n}\lambda_j\right) \sum_{i=1}^n \frac{(1+y)^{-\lambda_i-1}}{\prod_{j\neq i}(\lambda_j-\lambda_i)} .  \] \qed
\end{Ex}
\begin{Ex}\rm
While the derivation of Theorem \ref{th:main-Pareto}(c) basically relies on a probabilistic argument using transition probabilities, most distributional properties
in our construction rely only on the matrix--exponential forms of the phase--type distributions. To emphasize this point, 
we here consider a distribution which can be written in a matrix--exponential form but which is not phase--type. For such distributions the matrix--exponentials are no longer transition matrices.
Distributions whose density can be written in terms of a matrix--exponential, including the phase--type distributions, are known as matrix--exponential distributions (see \cite{bladt2017matrix} for further details) and consist exactly of the distributions with rational Laplace transform. Concretely, 
\[ f(x) = \frac{101}{100} e^{-x}(1-\cos (10x)) \]
is an example of a density that does not belong to a random variable of phase--type but has rational Laplace transform
\[  L_f(s)= \frac{101}{s^3+3s^2+103s+101} \]
and can be written in matrix--exponential form as
\[  f(x)=(101,0,0) \exp \left( 
\begin{pmatrix}
0 & 1 & 0 \\
0 & 0 & 1 \\
-101 & -103 & -3
\end{pmatrix} x
  \right)  
\begin{pmatrix}
0 \\
0\\
1
\end{pmatrix}
 . \]
 Let $\vect{\pi}=(101,0,0)$, $\vect{t}=(0,0,1)^\prime$ and let $\mat{T}$ denote the matrix inside the exponential.
Define the resolvent by 
 \[ \mat{R}(s)=(s\mat{I}-\mat{T})^{-1} 
  \]
and let 
\[ Y=e^{X}-1 . \]
Then the density for $Y$ is given by
\[  f_Y(y)=\vect{\pi}(1+y)^{\mat{T}-\mat{I}}\vect{t} , \]
and since only the first and last elements of $\vect{\pi}$ and $\vect{t}$ are different from zero, respectively, only the $(1,3)-$element in $\mat{R}$ is of importance. Since
\[ \mat{R}(1,3) = \frac{1}{(s+1)(s+1+10\ii)(s+1-10\ii)}, \]
the density for $Y$ is readily calculated (using the residue theorem) to be
\[  f_Y(y)=\frac{101}{100}(1+y)^{-2} -\frac{101}{200}(1+y)^{-2-10\ii}-\frac{101}{200}(1+y)^{-2+10\ii},  \]
which simplifies to
\[  f_Y(y)=\frac{101}{100}(1+y)^{-2}\left(1- \cos \left(10\log(1+y) \right)  \right) . \]
In Figure \ref{ME-ex}, both $f$ and $f_Y$ are plotted. As might be expected, one observes that the shape of the distribution is somewhat preserved while the tail is stretched out.
\begin{figure}
  \centering
  \includegraphics[scale=0.80]{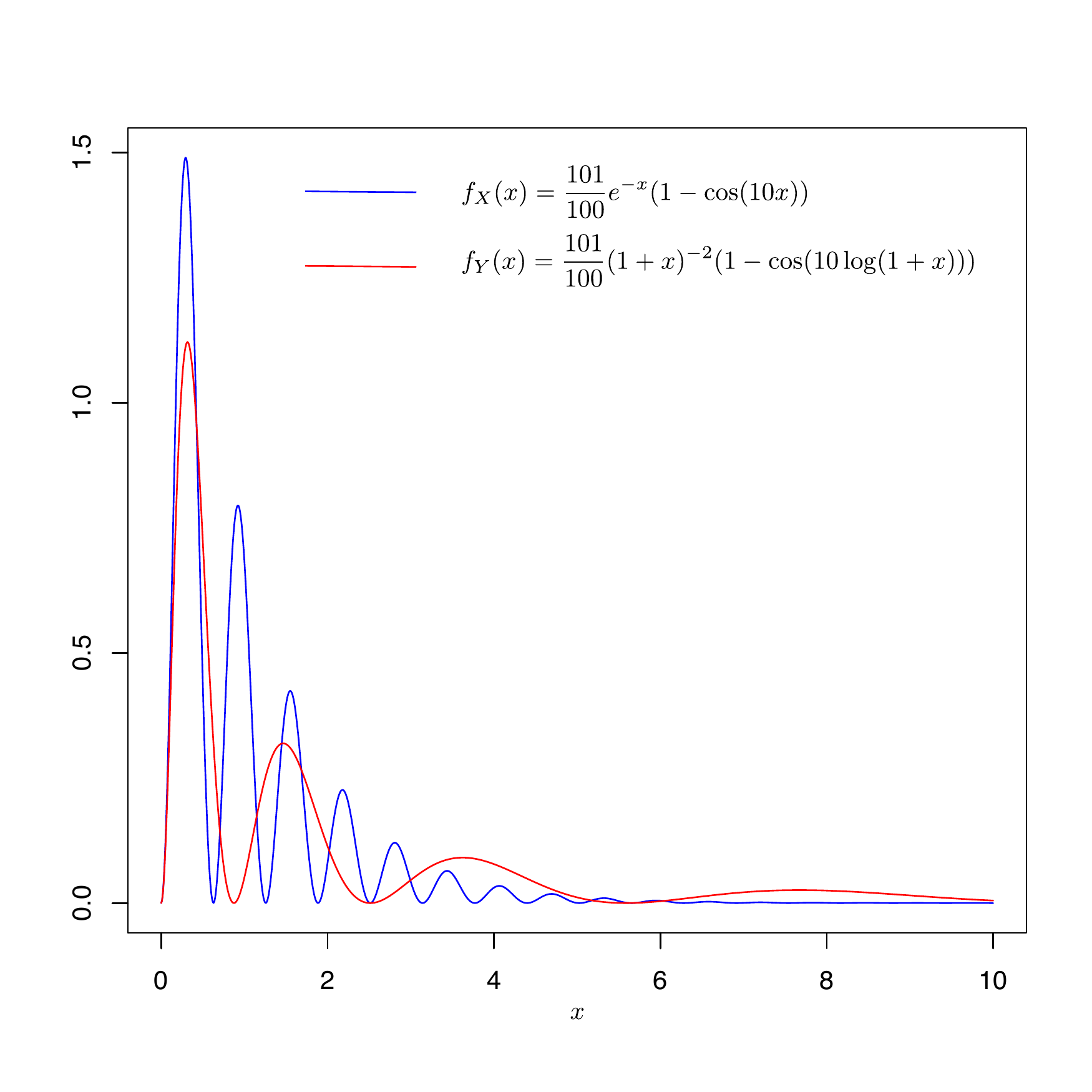}
  \caption{The density for a matrix--exponentially distributed random variable $X$ (blue) and the corresponding density for $Y=\exp(X)-1$ (red).}
  \label{ME-ex}
\end{figure}\qed
\end{Ex}
\section{Analogous generalisations of other classes of distributions}\label{sec:M-Gumbel}
We now list a number of distributions obtained through a transformation of a PH random variable other than the exponential function. All the these transformations, as opposed to the M--Pareto distribution, are parameter--dependent, and as such fitting procedures to data are less straightforward.


\subsection{Matrix-Weibull distributions}\label{eq:power-PH}
Let $X\sim \mbox{PH}_p(\vect{\pi},\mat{T})$. Inspired by the construction of a Weibull random variable as a power of an exponentially distributed random variable, one can define
\[  Y = X^{1/\beta}  \]
for some $\beta >0$. The distribution function of $Y$ is immediately seen to be
\[ F_Y(y) = 1- \vect{\pi}e^{\mat{T}y^\beta}\vect{e} \]
with corresponding density
\[ f_Y(y)=\vect{\pi}e^{\mat{T}y^\beta}\vect{t} \beta y^{\beta-1} . \]
$Y$ can be called a \textit{matrix-Weibull distribution}, since the scale parameter of the usual Weibull distribution with cumulative distribution function $F_{Wei}(y)=1-\exp(-cy^{\beta})$ is now replaced by a matrix. Alternatively, one may refer to $Y$ as a \textit{power-PH distribution}. Its mean is given by
\begin{eqnarray*}
\Exp (Y)&=&\int_0^\infty y\vect{\pi}e^{\mat{T}y^\beta}\vect{t} \beta y^{\beta-1}dy \\
&=& \vect{\pi} \int_0^\infty x^{1/\beta} e^{\mat{T}x}dx\; \vect{t} \\
&=&\Gamma (1+1/\beta)\vect{\pi} (-\mat{T})^{-1/\beta-1} \vect{t},
\end{eqnarray*}
where the last line follows from Theorem 3.4.4 of \cite{bladt2017matrix}. More generally, the  $\theta$th moment ($\theta>0$) can be deduced similarly to be
\[ \Exp (Y^\theta) = \Gamma (1+\theta/\beta)\vect{\pi}(-\mat{T})^{-\theta/\beta-1}\vect{t} .  \]
The moment generating function is found by expansion,
\begin{eqnarray*}
   \Exp (e^{\theta Y}) &=& \int_0^\infty e^{\theta y}\vect{\pi}e^{\mat{T}y^\beta}\vect{t}\beta y^{\beta-1}dy \\
   &=&\vect{\pi}
   \sum_{n=0}^\infty  \frac{\theta^n}{n!}\int_0^\infty x^{n/\beta}e^{\mat{T}x}dx \;\vect{t} \\
   &=&
   \sum_{n=0}^\infty  \frac{\theta^n}{n!}\Gamma (1+n/\beta )\vect{\pi}(-\mat{T})^{-n/\beta-1} \vect{t} ,
\end{eqnarray*} 
which is valid for $\beta>1$. 

If $(\vect{\pi},\mat{T})$ is the representation of an $\mbox{Er}_n(\lambda )$--distribution (see Example \ref{ex:erlang}),  then 
 \[  f_Y(y)= \beta\frac{\lambda^{n-1}}{(n-1)!}y^{n\beta-1} e^{-\lambda y^\beta} . \]
Therefore, as for the matrix--Pareto distribution, a dense class of distributions with a Weibull tail can then be defined through mixtures of the form
\[ f(y)= \beta \sum_{i=1}^N \alpha_i \frac{\lambda_i^{n_i-1}}{(n_i-1)!}y^{n_i\beta-1} e^{-\lambda_i y^\beta} . \]
It is evident that this class will be a very appropriate choice for approximating distributions which have tails (suspected to be) close to a Weibull.

\subsection{Exponential--PH distribution}\label{sec:M-Gumbel1}
\begin{Def}
 Let $X\sim \mbox{PH}_p(\vect{\pi},\mat{T})$. Then define an exponential-PH distribution
 with parameters $\mu$, $\sigma$, $\vect{\pi}$ and $\mat{T}$ as the distribution of the random variable
 \[  Y = \mu - \sigma \log (X) . \]
 \end{Def} 
The distribution function of $Y$ is given by
\begin{eqnarray}
F_Y(y)&=&\Prob \left(\log(X)\geq -\frac{y-\mu}{\sigma}\right) \nonumber\\
&=&\Prob \left(X\geq \exp \left( -\frac{y-\mu}{\sigma} \right)\right)\nonumber \\
&=&\vect{\pi}\exp \left(\mat{T} e^{-(y-\mu)/\sigma} \right)\vect{e} \label{that}
\end{eqnarray}
with support on $(-\infty,\infty)$. The density is obtained through differentiation as
\[  f_Y(y)=\frac{1}{\sigma} e^{-\frac{y-\mu}{\sigma}}\vect{\pi}e^{\mat{T}e^{-\frac{y-\mu}{\sigma}}}\vect{t} =\frac{1}{\sigma} \vect{\pi} \exp \left( -\frac{y-\mu}{\sigma}\mat{I} + \mat{T}e^{-\frac{y-\mu}{\sigma}} \right)\vect{t} . \]
 One can interpret \eqref{that} as a matrix extension of the usual Gumbel distribution with cumulutative distribution function $F_{Gu}(y)=\exp(-\exp(-(y-\mu)/\sigma))$, but since here the matrix $\mat{T}$ replaces the constant $-1$ rather than a parameter, it is less obvious to call it a \textit{matrix-Gumbel distribution}, and one may prefer the name \textit{exponential-PH distribution}. 
Concerning the mean we proceed as follows. Since
\[  \Exp (\log (X))= -\gamma -\vect{\pi}\log (-\mat{T})\vect{e}   \]
(use \eqref{eq:gen-mean-PH} or see e.g. \cite{bladt2017matrix} p. 180) we get that
\[ \Exp (Y) = \mu + \sigma \gamma + \sigma \vect{\pi}\log (-\mat{T})\vect{e} , \]
where $\gamma$ is Euler's constant. In particular, if $X\sim \mbox{Exp} (1)$ then $\mat{T}=-1$, the term $\log (-\mat{T})=0$ and $\Exp (Y)= \mu + \gamma \sigma$. 

Concerning the Laplace transform for $Y$,
\[  L_Y(s)=e^{-s\mu} \Gamma (1+s\sigma) \vect{\pi}(-\mat{T})^{s\sigma}\vect{e},  \]
which follows from
\begin{eqnarray*}
L_Y(s)&=&\Exp \left( e^{-s(\mu-\sigma \log (X))}  \right) \\
&=&e^{-\mu s}\Exp (X^{s\sigma}) \\
&=&e^{-\mu s} \Gamma (1+s\sigma)\vect{\pi}(-\mat{T})^{-s\sigma}\vect{e} ,
\end{eqnarray*}
and where the last step follows from Theorem 3.4.6 of \cite{bladt2017matrix} (p. 175).
For the special (scalar) case of $X\sim \mbox{Exp}(1)$, we recuperate the known formula $e^{-\mu s}\Gamma (1+\sigma s)$ of the Laplace transform of the Gumbel distribution. 

If $(\vect{\pi},\mat{T})$ is the representation of an $\mbox{Er}_n(\lambda )$--distribution (see Example \ref{ex:erlang}),  then  
\[  f_Y(y)=\frac{1}{\sigma} e^{-\frac{y-\mu}{\sigma}}
\frac{\lambda^{(n-1)}}{(n-1)!}e^{-(n-1)\frac{y-\mu}{\sigma}} \exp \left( -\lambda e^{-\frac{y-\mu}{\sigma}}  \right),
  \]
so a dense class of distributions with a Gumbel type of tail can be defined in terms of distributions of the form
 \[  f(y)=\frac{1}{\sigma} e^{-\frac{y-\mu}{\sigma}}\sum_{i=1}^N \alpha_i 
\frac{\lambda_i^{(n_i-1)}}{(n_i-1)!}e^{-(n_i-1)\frac{y-\mu}{\sigma}} \exp \left( -\lambda_i e^{-\frac{y-\mu}{\sigma}}  \right) .
  \]


\subsection{Shifted-power--PH distribution}
More generally, inspired by the Generalized Extreme Value distribution $\mbox{GEV}(\mu,\sigma,\xi)$ with cumulative distribution function 
\begin{equation}\label{thesesc}  F(y) =\left\{  
\begin{array}{ll}
\exp \left(  -\left( 1 + \xi \frac{y-\mu}{\sigma} \right)^{-1/\xi} \right), & \xi \neq 0, \\
\exp \left( -\exp \left( -\frac{y-\mu}{\sigma} \right) \right), & \xi = 0,
\end{array}
\right.
  \end{equation}
we see that for $X\sim \mbox{Exp}(1)$ one has
\begin{eqnarray*}
F(y) &=& \Prob \left( X>\left( 1 + \xi \frac{y-\mu}{\sigma} \right)^{-1/\xi} \right) \\
&=&\Prob \left( \frac{X^{-\xi}-1}{\xi} \leq \frac{y-\mu}{\sigma} \right) \\
&=& \Prob \left(  \mu+  \sigma \frac{X^{-\xi}-1}{\xi} \leq y \right). 
\end{eqnarray*}
That is, for $\xi\neq 0$ one can construct $Y\sim \mbox{GEV}(\mu,\sigma,\xi)$ from an exponentially distributed random variable $X\sim \mbox{Exp} (1)$ by letting
\begin{equation}\label{these}
  Y=\mu+  \sigma \frac{X^{-\xi}-1}{\xi} . \end{equation}
Taking the limit $\xi\rightarrow 0$ then leads back to 
\[  Y=\mu-\sigma \log(X),   \]
which is the Gumbel case treated in Section \ref{sec:M-Gumbel1}, so we focus on $\xi\neq 0$ in the sequel. 

Letting now $X\sim \mbox{PH}(\vect{\pi},\mat{T})$ replace the exponentially distributed random variable above, we apply the transformation \eqref{these} for $\xi\neq 0$. 
The cumulative distribution function for $Y$ is then readily obtained as
\begin{equation}\label{these2}  F_Y(y) = \vect{\pi}\exp\left( \mat{T}\left( 1 + \xi \frac{y-\mu}{\sigma} \right)^{-1/\xi} \right)\vect{e}  , \end{equation}
and the density is obtained through differentiation
\begin{eqnarray*}
f_Y(y) &=&\vect{\pi}\exp\left( \mat{T}\left( 1 + \xi \frac{y-\mu}{\sigma} \right)^{-1/\xi} \right)\mat{T}\vect{e} \left( -\frac{1}{\xi}  \right)\left( 1 + \xi \frac{y-\mu}{\sigma} \right)^{-1/\xi-1}\frac{\xi}{\sigma} \\
&=&\frac{1}{\sigma}\vect{\pi}\exp\left( \mat{T}\left( 1 + \xi \frac{y-\mu}{\sigma} \right)^{-1/\xi} \right)\vect{t}\left( 1 + \xi \frac{y-\mu}{\sigma} \right)^{-(1+\xi)/\xi} \\
&=& \frac{1}{\sigma}z(y)^{\xi+1}\vect{\pi}\exp\left( \mat{T}z(y) \right)\vect{t},
\end{eqnarray*}
where
\[ z(y)=\left( 1 + \xi \frac{y-\mu}{\sigma} \right)^{-1/\xi} . \]
Again, the constant -1 in \eqref{thesesc} is replaced by the matrix $\mat{T}$ in \eqref{these2} now, so that one could view the latter as a certain matrix-GEV distribution, but the name \textit{shifted-power--PH distribution} may be considered more appropriate. 
The mean of $Y$ is 
\begin{eqnarray*} 
\Exp (Y)&=&\mu + \frac{\sigma}{\xi}\left( \Exp (X^{-\xi}-1) \right) \\
&=&\mu +  \frac{\sigma}{\xi} \left( \Gamma (1-\xi) \vect{\pi}(-\mat{T})^\xi \vect{e} -1 \right),
\end{eqnarray*}
which follows from Theorem 3.4.6 of \cite{bladt2017matrix} whenever $\xi<1$. Higher order integer moments can be calculated recursively by expanding the power of $Y$ in terms of $X$ and using Theorem 3.4.6 of \cite{bladt2017matrix} again.

If $(\vect{\pi},\mat{T})$ is the representation of an $\mbox{Er}_n(\lambda )$--distribution, then
\[  f_Y(y) = \frac{1}{\sigma}\frac{\lambda^{(n-1)}}{(n-1)!}t(y)^{\xi+n} \exp \left( -
\lambda t(y) \right)  \]
so a dense class of distributions with GEV--type tails can be defined in terms of mixtures 
\[ f(y)= \frac{1}{\sigma}\sum_{i=1}^N \alpha_i \frac{\lambda_i^{(n_i-1)}}{(n_i-1)!}t(y)^{\xi+n_i} \exp \left( -
\lambda_i t(y) \right) .  \]




\section{Modelling with a matrix--Pareto distribution}\label{sec:Modelling}
As mentioned earlier, the denseness of PH distributions in the class of all distributions on the positive half-line makes them an attractive modelling tool. However, traditionally one disadvantage of this class is that by its exponentially decaying tails it may require many states to be a reasonable fit to distributions with a heavy tail (even on finite time intervals). 
From the denseness of matrix--Pareto distributions in the class of all distributions on the positive half-line (Theorem \ref{th:denseness-M-Pareto} or  \cite[Th.1]{ahn2012}), it becomes clear that if one reckons a heavy-tailed distribution to describe a given set of data-points well, then a fitting procedure with matrix--Pareto distributions may lead to a much more parsimonious (and natural) model (quantitatively expressed by a low number of states and a sparse intensity matrix $\mat{T}$) of the underlying distribution yet keeping the advantageous properties of phase-type distributions. In \cite{ahn2012}, a Danish fire insurance dataset was investigated in this direction. Here, in the context of the present paper, we would like to illustrate the potential of the inhomogeneous PH approach proposed in this paper for a set of 1282 Dutch fire insurance claims from the period 2000-2014 (all in excess of 1 mio.\  EUR), which were already studied in detail and modelled by other means in \cite{abt}. Indeed, after testing various models, in \cite[p.107]{abt} a splicing model with a mixed Erlang component for the bulk, a Pareto distribution for the range to the right of that, and another Pareto distribution for the tail was identified to be an appropriate choice for that dataset.
Apart from seeking an adequate fit by a matrix--Pareto distribution, we shall pay special attention to the aforementioned (possibly mixed) Erlang structure and compare it to the splicing method.
 \\
 
In order to fit the totality of the data by one single distribution, we propose alternatively a matrix--Pareto distribution (log--Phase--type), which also contains the Pareto tail as a special case. Thus we must fit a phase--type distribution to the logarithm of the data. Since the log--data have a support contained in $[8.5,20]$, we decide to subtract 8.5 from all log data which makes it possible to reduce the number of phases significantly. Hence the model we consider is 
\[  y_1,y_2,...,y_N \sim \mbox{PH}(\vect{\pi},\mat{T}), \]
where $y_i=\log(x_i)-8.5\ (i=1,\ldots,N=1282)$ are i.i.d.\ transformed data points $x_1,...,x_N$. The implied model for the original generic data $X$ is then
\[  X= e^{8.5}e^{Y}, \ \ Y\sim \mbox{PH}(\vect{\pi},\mat{T}) , \]
i.e. a (scaled) log--phase--type distribution. 

Utilizing an EM-algorithm (cf.\ \cite{asmner}), we use 20 phases to obtain an adequate fit to the data and the result is presented in Figure \ref{fit}.
\begin{figure}[h]
  \centering
  \includegraphics[scale=0.40]{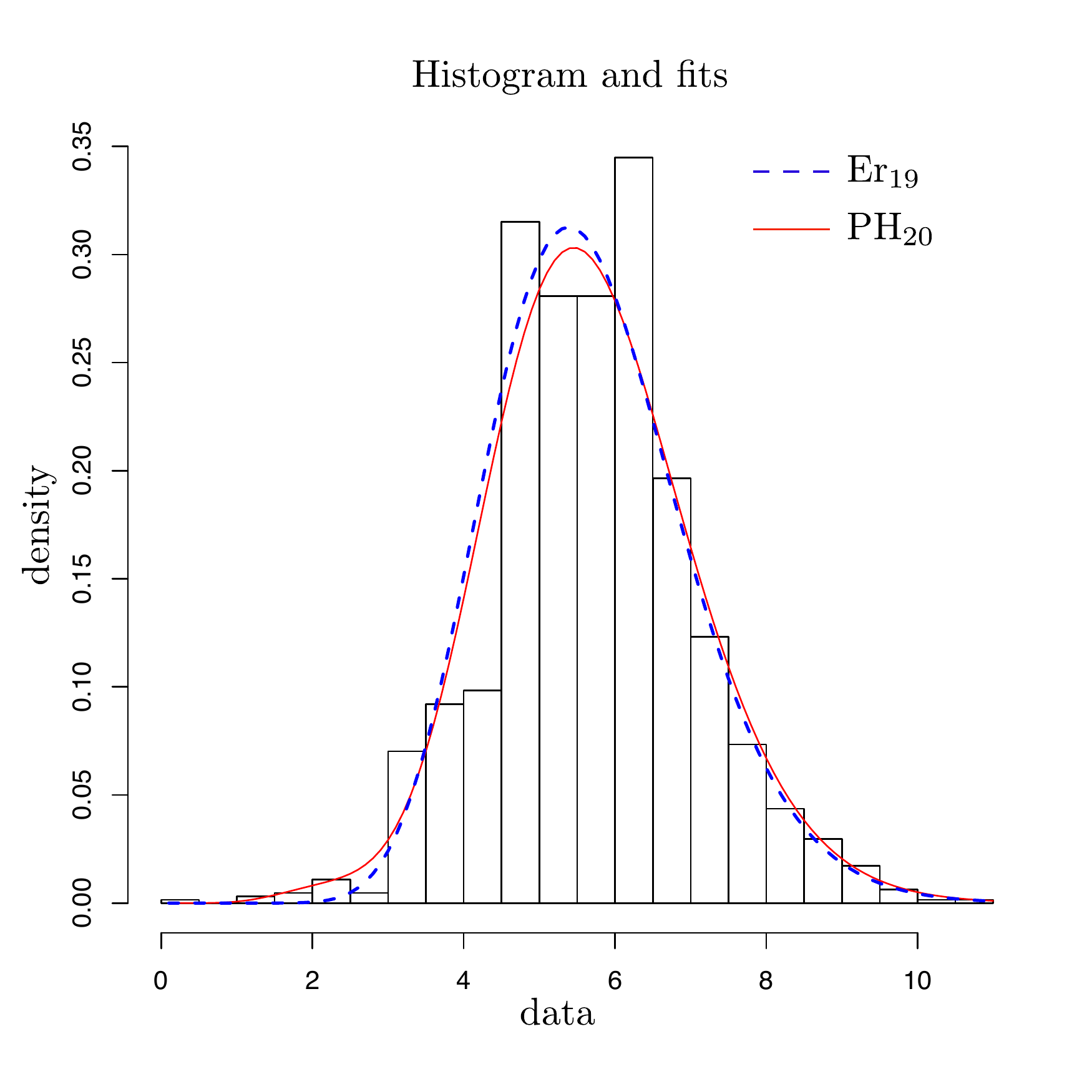}
  \includegraphics[scale=0.40]{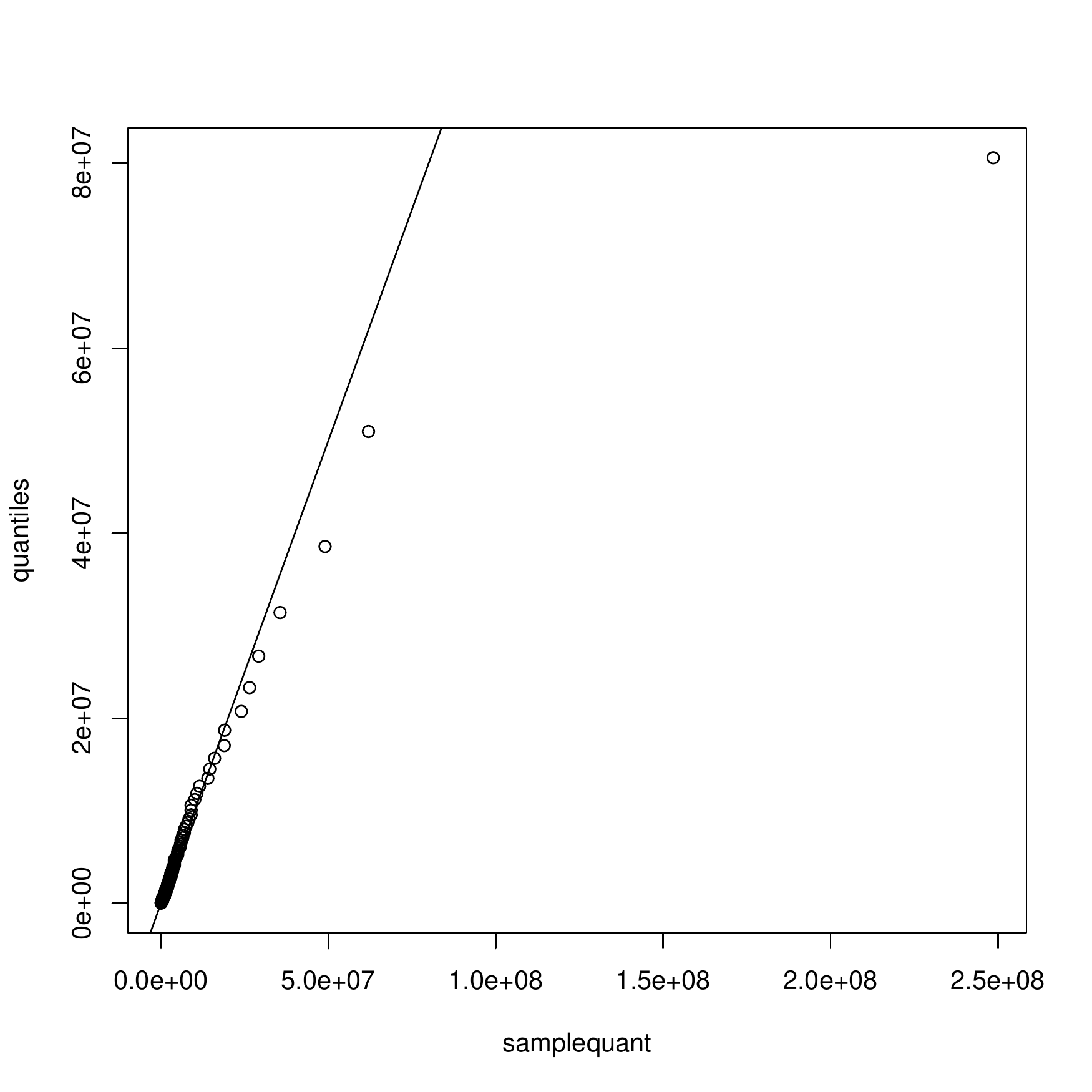}
  \caption{Fitted phase--type distribution to log--data (left) and QQ-plot of original data vs.\ the fitted log--phase--type distribution.}
  \label{fit}
\end{figure}
The choice of 20 phases is taken so as to allow a sufficiently flexible phase--type fit to the data, the concrete magnitude being supported by trial experiments. Though the starting point of the EM algorithm then was a general phase--type distribution, the resulting estimate reduces to a much simpler structure, a phenomenon which is commonly observed in phase--type fitting. Concretely, the structure of the fitted intensity matrix is as follows
\[\tiny
\left(
\begin{array}{rrrrrrrrr|rr|rrrrrrrr|r}
-a & a & 0 & 0 & 0 & 0 & 0 & 0 & 0 & 0 & 0 & 0 & 0 & 0 & 0 & 0 & 0 & 0 & 0 & 0 \\
 0 & -a & a & 0 & 0 & 0 & 0 & 0 & 0 & 0 & 0 & 0 & 0 & 0 & 0 & 0 & 0 & 0 & 0 & 0 \\
0 & 0 & -a & a & 0 & 0 & 0 & 0 & 0 & 0 & 0 & 0 & 0 & 0 & 0 & 0 & 0 & 0 & 0 & 0 \\
 0 & 0 & 0 & -a & a & 0 & 0 & 0 & 0 & 0 & 0 & 0 & 0 & 0 & 0 & 0 & 0 & 0 & 0 & 0  \\
  0  & 0 & 0 & 0 & -a & a & 0 & 0 & 0 & 0 & 0 & 0 & 0 & 0 & 0 & 0 & 0 & 0 & 0 & 0  \\
  0 & 0 & 0 & 0 & 0 & -a & a & 0 & 0 & 0 & 0 & 0 & 0 & 0 & 0 & 0 & 0 & 0 & 0 & 0   \\
  0 & 0 & 0 & 0 & 0 & 0 & -a & a & 0 & 0 & 0 & 0 & 0 & 0 & 0 & 0 & 0 & 0 & 0 & 0   \\
  0 & 0 & 0 & 0 & 0 & 0 & 0 & -a & a & 0 & 0 & 0 & 0 & 0 & 0 & 0 & 0 & 0 & 0 & 0  \\
 0 & 0 & 0 & 0 & 0 & 0 & 0 & 0 & -a & a_1 & 0 & 0 & 0 & 0 & 0 & 0 & 0 & 0 & 0 & 0   \\ \hline
 0 & 0 & 0 & 0 & 0 & 0 & 0 & 0 & 0  & -a_2  & a_3 & 0 & 0 & 0 & 0 & 0 & 0 & 0 & 0 & 0   \\\hline
  0 & 0 & 0 & 0 & 0 & 0 & 0 & 0 & 0 &  0 & -b  & b & 0 & 0 & 0 & 0 & 0 & 0 & 0 &0    \\
  0 & 0  & 0 & 0 & 0 & 0 & 0 & 0 & 0 & 0 &  0 & -b  & b & 0 & 0 & 0 & 0 & 0 & 0 & 0   \\
 0 & 0 & 0  &0 & 0 & 0 & 0 & 0 & 0 & 0 & 0 &  0 & -b  & b & 0 & 0 & 0 & 0 & 0 & 0   \\
  0 & 0 & 0 & 0  & 0 & 0 & 0 & 0 & 0 & 0 & 0 & 0 &  0 & -b  & b & 0 & 0 & 0 & 0 & 0   \\
  0 & 0 & 0 & 0 & 0 & 0 & 0 & 0 & 0 & 0 & 0 & 0 & 0 &  0 & -b  & b & 0 & 0 & 0 & 0   \\
  0 & 0 & 0 & 0 & 0 & 0 & 0 & 0 & 0 & 0 & 0 & 0 & 0 & 0 &  0 & -b  & b & 0 & 0 & 0    \\
  0 & 0 & 0 & 0 & 0 & 0 & 0 & 0 & 0 & 0 & 0 & 0 & 0 & 0 & 0 &  0 & -b  & b & 0 & 0    \\
 0 & 0 & 0 & 0 & 0 & 0 & 0 & 0 & 0 & 0 & 0 & 0 & 0 & 0 & 0 & 0 &  0 & -b  & b & 0    \\
 0 & 0 & 0 & 0 & 0 & 0 & 0 & 0 & 0 & 0 & 0 & 0 & 0 & 0 & 0 & 0 & 0 &  0 & -b  & b_1    \\ \hline
 0 & 0 & 0 & 0 & 0 & 0 & 0 & 0 & 0 & 0 & 0 & 0 & 0 & 0 & 0 & 0 & 0 &  0 & 0  & -c 
\end{array}
\right) ,
\]
where $a=3.322438$, $a_1=3.292191$, $ a_2=3.322884$, $a_3=3.292191$, $b=3.288628$,  $b_1 = 2.063919$  and  $c = 48.21421$ .
The estimator for the initial distribution is 
\[ \vect{\pi} = (0.9992200,0,0,0,0,0,0,0,0,0,0,0,0,0,0,0,0,0,0,0.00078 ) . \]
Hence we may describe this distribution in terms of 8 parameters. The distribution consists of a very fast state (the last one), which can be entered directly with the very small probability 0.00078, representing a possibility to have very small outcomes (one can interpret this to stem from some data points in that region due to the left-truncation of the data at $y_i=0$). Furthermore, there are two blocks of 9--dimensional convolutions of exponential distributions (i.e., Er$_9$ distributions) with intensities $a$ and $b$, respectively. 
Exits to the absorbing state are possible from states 9,10,19 and 20.  

The fit of the phase--type data (cf.\ Figure \ref{fit}) looks adequate in the main body and most of the tail (QQ--plot), which is quite remarkable when compared to the much more complex model suggested in \cite[p.107]{abt}. The seven most extreme tail points start to deviate from the fit, being most pronounced for the last three points. This could be expected since maximum likelihood treats all points of the distribution equally as opposed to extreme value methods focussing on the tail fit. In any case, the identified matrix-Pareto distribution seems to be an excellent and parsimonious fit to the data over quite a large range. 

From the concrete numerical values, one can also see that a simple  matrix-Pareto distribution with underlying Er$_{19}$ distribution (with rate around 3.3, and hence altogether only two parameters(!)) might be a (surprisingly) reasonable description of the data as well. To further pursue this point, we fitted an $\mbox{Er}_{19}(\lambda)$  (i.e.\ a convolution of 19 i.i.d.\ exponential distributions with intensities $\lambda$) to the data resulting in an estimated intensity of $\hat{\lambda}=3.340752$. The Erlang distribution indeed also fits the data well in a first approximation (cf.\ the dashed blue line in Figure \ref{fit}), though it is clear that the full PH is a more suitable model for small values. The log--likelihoods for the full phase--type and Erlang fits are -1250.092 and -1366.735, respectively. 

In conclusion, one may argue that we have obtained a parsimonious, and still somewhat similar model as the one in \cite{abt} but using a quite different rationale, and without the need to decide about the location of splicing points. In that respect, it has the advantage to be rather easily implemented in practice. 

\section{Conclusion}\label{concl}
In this paper we proposed a simple way to carry over the advantages of PH distributions as a modelling tool to distributions with a different tail behaviour. This can be achieved by introducing time-inhomogeneity in the Markov jump process underlying the construction of the PH distribution. In case the respective time scaling is the same for all states of the Markov process, this leads to a number of new simple families of distributions (including matrix-Pareto distributions, (shifted) power-PH distributions, exponential-PH distributions), that all inherit the denseness in the class of distributions on the positive real line, but can lead to much more parsimonious descriptions than a direct PH fit, when the tail of the distribution is in fact a corresponding transform of a PH tail. In view of its relevance in applications, we focused on heavy tails and in particular established various properties of matrix-Pareto distributions. We finally studied the modelling performance of the latter on a real-world dataset from fire insurance, with rather  promising results.

There are several possible directions for future research. One may be to more directly merge light- and heavy-tail fitting by time-scaling only some of the states of the Markov process, and possibly design an EM algorithm that automatically decides on the basis of the dataset how many components of each type are needed for a good description of the data. It should also be feasible to extend the approach proposed in this paper to complement splicing models under censoring, see e.g.\  \cite{reynIme}. Finally, adapting extreme value techniques to the dense distribution classes proposed in this paper may provide interesting alternatives for parsimonious modelling with emphasis on the appropriateness of the fit in the tail, but still good performance for smaller values. \\

\textbf{Acknowledgement.} H.A. acknowledges financial support from the Swiss National Science Foundation Project 200021\_168993.

\bibliographystyle{natbib}
\bibliography{PHBib}

\appendix

\end{document}